\numberwithin{equation}{section}
\newcommand\A{\mathbb{A}}
\newcommand\C{\mathbb{C}}
\newcommand\Ak{\A_{K}}
\newcommand\Ik{\I_{K}}
\newcommand\Ck{\mathcal{C}_{K}^{1}}
\newcommand\Zeta{\zeta(f,\chi)}
\newcommand\Dx{d^{*}x}
\newcommand\Zetaa{\zeta(\hat{f},\check{\chi})}
\newcommand\I{\mathbb{I}}
\newcommand\sO{\mathcal{O}}
\newcommand\sC{\mathcal{C}}
\newcommand\sS{\mathcal{S}}
\newcommand{\R}{\mathbb{R}}
\newcommand\Hom{{\mathrm {Hom}}}
\newtheorem{thm}{Theorem}[subsection]
\newtheorem{prop}[thm]{Proposition}
\newtheorem{lemma}[thm]{Lemma}
\theoremstyle{definition}
\theoremstyle{remark}
\newtheorem{rmk}[thm]{\textbf{Remark}}
\newtheorem{eg}[thm]{\textbf{Example}}
\theoremstyle{definition}
\newtheorem{dfn}{Definition}[subsection]
\theoremstyle{remark}
\theoremstyle{remark}
\def\imod#1{\allowbreak\mkern10mu({\operator@font mod}\,\,#1)}
\title{\textbf{Meromorphic Continuation Of Global Zeta Function For Number Fields}}
\author{SUBHAM DE}
\affil{Department of Mathematics}
\affil{Indian Institute of Technology, Delhi, India.}
\affil{Email: \textbf{mas227132@iitd.ac.in}}
\date{}
\begin{document}

\maketitle
\thispagestyle{empty}

\begin{abstract}
	In the paper, we shall establish the existence of a meromorphic continuation of the Global Zeta Function $\zeta(f,\chi)$ of a \textit{Global Number Field} $K$ and also deduce the functional equation for the same, using different properties of \textit{the id\`{e}le class group} $\mathcal{C}_{K}^{1}$ of a global field $K$ extensively defined using basic notions of \textit{Ad\`{e}les} ($\Ak$) and \textit{Id\`{e}les} ($\Ik$) of $K$, and also evaluating Fourier Transforms of functions $f$ on the space $\mathcal{S}(\A_{K})$ of \textit{Ad\`{e}lic Schwartz-Bruhat Functions}. A brief overview of most of the concepts required to prove our desired result have been provided to the readers in the earlier sections of the text.\\\\
	\textit{\textbf{Keywords and Phrases: }}Ad$\grave{e}$les , Id$\grave{e}$les , Global Zeta Function , Archimedean Valuations , Non-Archimedean Valuations , Restricted Direct Products , Fourier Transforms , Riemann-Roch Theorem , Characters of a Group , Haar Measure , Schwartz-Bruhat Spaces , Ad$\grave{e}$lic Schwartz-Bruhat Functions , Local Rings , Poisson Summation Formula , Meromorphic Continuation. \\\\
	\texttt{\textbf{2020 MSC: }}\texttt{Primary  11-02, 11R04, 11R37, 11M06, 11F70 }.\\
	\hspace*{50pt}\texttt{Secondary 11M41, 28C10, 11R56 }.
\end{abstract}

\pagenumbering{arabic}

\newpage
\tableofcontents

\pagestyle{fancy}

\fancyhead[LO,RE]{\markright}
\lfoot[]{Subham De}
\rfoot[]{IIT Delhi, India}

\section{Ad\`{e}les and Id\`{e}les: A Brief Introduction}
First we introduce some important notions and terms which we shall use to describe the notion of \textit{Ad\`{e}les} and \textit{Id\`{e}les} later on explicitly.\\
\subsection{Restricted Direct Products}
Consider $I:=\{\nu\}$ to be any \textit{indexing set}, and $I_{\infty}$ be any fixed finite subset of $I$. Assuming that, we have a \textit{locally compact} group $G_{\nu}$ ( not neccessarily abelian ), corresponding to every index $\nu\in I$, and also suppose we obtain a \textit{compact open} (consequently closed also under the topology) group $H_{\nu}$, relative to every such index $\nu\notin I_{\infty}$ having a subgroup structure corresponding to the same index $\nu$. Thus, we have the following definition:	
\begin{dfn}\label{def1}
(Restricted Direct Product)  For any $\nu\notin I_{\infty}$, we define the \textit{restricted direct product} of the group $G_{\nu}$ with respect to the subgroup $H_{\nu}$ as:
\begin{eqnarray}\label{1}
	G:={\prod\limits_{\nu\in I}}' G_{\nu}=\{(x_{\nu})_{\nu}\mbox{ }|\mbox{ }x_{\nu}\in G_{\nu}\mbox{ with }x_{\nu}\in H_{\nu}\mbox{ for infinitely many }\nu\}
\end{eqnarray}

\end{dfn}
\begin{rmk}\label{rmk1}
	The topology on $G$ is defined by fixing a neighbourhood base for the identity element which consists of sets of the form $\prod\limits_{\nu}N_{\nu}$, $N_{\nu}$ being a neighbourhood of $1$ in $G_{\nu}$ such that, $N_{\nu}=H_{\nu}$ for infinitely many $\nu$. Important to mention that, this topology is not the same as the product topology.
\end{rmk}
The restricted direct product $G$ defined in \textit{Definition\eqref{def1}} satisfies some important prortant properties:
\begin{prop}\label{prop1}
	The following holds for a restricted direct product $G$ of $G_{\nu}$ with respect to $H_{\nu}$ defined in \textit{Definition\eqref{def1}}:\\
	\begin{enumerate}  [\textbullet]
		\item $G$ is \textit{locally compact}.\\
		\item Any subgroup $Y\subseteq G$ has a compact closure $\Leftrightarrow$ For some family $\{K_{\nu}\}_{\nu}$ of compact subsets such that, $K_{\nu}\subseteq G_{\nu}$, and $K_{\nu}=H_{\nu}$ for infinitely many indices $\nu$, we then have, $Y\subseteq\prod\limits_{\nu}K_{\nu}$.
	\end{enumerate}
\end{prop}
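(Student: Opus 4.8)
The plan is to treat $G$ as a topological group, so that local compactness reduces to producing a single compact neighbourhood of the identity, while for the second assertion I would combine the continuity of the coordinate projections $\pi_\nu : G \to G_\nu$ with a covering argument. The technical backbone of the whole proof is the observation that, for every finite subset $S\subseteq I$ with $S\supseteq I_\infty$, the ``block''
\[
W_S:=\prod\limits_{\nu\in S}G_{\nu}\times\prod\limits_{\nu\notin S}H_{\nu}
\]
is an open subgroup of $G$ on which the subspace topology inherited from $G$ coincides with the ordinary product topology. I would establish this first: at each of its points $W_S$ contains a basic neighbourhood of the shape described in \textit{Remark\eqref{rmk1}}, hence it is open, and because that neighbourhood base only constrains finitely many coordinates to proper open sets (the remaining ones being forced into $H_\nu$), intersecting the basic sets of $G$ with $W_S$ yields exactly the basic open sets of the product topology on $W_S$.

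Granting this, the first bullet is immediate. For each $\nu\in I_\infty$ I would choose a compact neighbourhood $C_\nu$ of $1$ in $G_\nu$ (available since every $G_\nu$ is locally compact) and set $C:=\prod\limits_{\nu\in I_\infty}C_{\nu}\times\prod\limits_{\nu\notin I_\infty}H_{\nu}$. This $C$ is a neighbourhood of the identity, and since $C\subseteq W_{I_\infty}$ carries the product topology by the backbone fact, Tychonoff's theorem (each $C_\nu$ and each $H_\nu$ being compact) shows that $C$ is compact. A topological group possessing one compact neighbourhood of $1$ is locally compact, which settles this part.

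For the second bullet I would argue both directions. For the reverse implication, if $Y\subseteq\prod\limits_{\nu}K_{\nu}$ with each $K_\nu$ compact and $K_\nu=H_\nu$ for all but finitely many $\nu$, then $\prod\limits_{\nu}K_{\nu}\subseteq W_S$ for a suitable finite $S\supseteq I_\infty$, so it is compact (Tychonoff together with the backbone fact) and therefore closed in the Hausdorff group $G$; hence $\overline{Y}\subseteq\prod\limits_{\nu}K_{\nu}$ is a closed subset of a compact set and is itself compact. For the forward implication, suppose $\overline{Y}$ is compact. The family $\{W_S : S\supseteq I_\infty \text{ finite}\}$ is an open cover of $G$, since any $x=(x_{\nu})$ lies in $W_S$ for $S=I_\infty\cup\{\nu : x_{\nu}\notin H_{\nu}\}$, which is finite; compactness then lets finitely many of these cover $\overline{Y}$, and taking $S$ to be their union gives $\overline{Y}\subseteq W_S$. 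I would then put $K_\nu:=\pi_\nu(\overline{Y})$ for $\nu\in S$, compact because the projections are continuous, and $K_\nu:=H_\nu$ for $\nu\notin S$; as $\overline{Y}\subseteq W_S$ forces $\pi_\nu(\overline{Y})\subseteq H_\nu$ off $S$, one checks $Y\subseteq\overline{Y}\subseteq\prod\limits_{\nu}K_{\nu}$ with the $K_\nu$ of the required form.

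The main obstacle I anticipate is not any single estimate but keeping the two topologies cleanly separated: the restricted product topology is strictly finer than the product topology in general, so every compactness claim must be routed through a block $W_S$ where the two agree, and I must confirm that the neighbourhoods of \textit{Remark\eqref{rmk1}} genuinely cut down to product-basic sets there. A secondary matter to pin down is the continuity of each $\pi_\nu$ and the Hausdorffness of $G$ (inherited from the $G_\nu$), both of which the argument uses; I would also flag that the phrase ``for infinitely many $\nu$'' in \textit{Definition\eqref{def1}} must be read as \emph{for all but finitely many} $\nu$ for the argument, and indeed the very notion of a restricted product, to be coherent.
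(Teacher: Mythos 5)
Your proof is correct, but there is nothing in the paper to measure it against: \textit{Proposition~\ref{prop1}} is stated as background and never proved in the text (it is imported from the standard references, in particular Ramakrishnan--Valenza, the paper's first bibliography entry). Your argument is the canonical one from that literature: the blocks $W_S=\prod_{\nu\in S}G_{\nu}\times\prod_{\nu\notin S}H_{\nu}$ are open subgroups on which the restricted-product topology agrees with the ordinary product topology; Tychonoff applied inside $W_{I_{\infty}}$ together with homogeneity of a topological group gives local compactness; and both directions of the compactness criterion are routed through the fact that $\{W_S\}_{S}$ is a cover of $G$ by open subgroups, so that every compact subset of $G$ lies inside a single block $W_S$. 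The supporting points you promise to verify (continuity of the projections $\pi_{\nu}$, Hausdorffness of $G$, openness of each $H_{\nu}$ making $W_S$ open, the intrinsic nature of compactness when passing between the two topologies) all go through without difficulty.

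The remark you make at the end deserves emphasis, because it identifies a genuine error in the paper rather than a stylistic quibble: \textit{Definition~\ref{def1}}, \textit{Remark~\ref{rmk1}}, and the statement of \textit{Proposition~\ref{prop1}} itself all say ``for infinitely many $\nu$'' where ``for all but finitely many $\nu$'' is required. Under the literal reading, the second bullet is false. For instance, take $G={\prod_p}'\Q_p$, the finite ad\`eles of $\Q$, restricted with respect to $H_p=\Z_p$; split the primes into two infinite sets $A$ and $B$, put $K_p=\Z_p$ for $p\in A$ and $K_p=p^{-1}\Z_p$ for $p\in B$, and let $Y$ be the subgroup generated by the elements $x^{(p)}$ (for $p\in B$) having $1/p$ in coordinate $p$ and $0$ elsewhere. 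Then every $K_p$ is compact, $K_p=H_p$ holds for infinitely many $p$, and $Y\subseteq\prod_p K_p$, yet $Y$ has non-compact closure: no single block $W_S$ contains all the $x^{(p)}$, whereas your covering argument shows that any compact subset of $G$ must lie in one. So reading the hypothesis as cofinite is not a convenience but a necessity, and your proof is precisely the proof of the corrected statement.
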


\subsection{Characters of Restricted Direct Products}
We start with the definition of the \textit{character }of a group:
\begin{dfn}\label{def2}
	(Characters of a group)  A \textit{character} $\chi$ of a topological group $G$ is a continuous homomorphism $G\longrightarrow \mathbb{C}^{*}$.
\end{dfn}
	A priori using the notions of $G$ defined in \textit{Definition\eqref{def1}}, we have the following result:
	\begin{lemma}\label{lemma1}
		For $\chi\in \Hom_{cont.}(G,\mathbb{C}^{*})$, $\chi$ is trivial on all but finitely many $H_{\nu}$. Precisely, $\chi(y_{\nu})=1$ for infinitely many $\nu$,  $\forall$  $y:=(y_{\nu})_{\nu}\in G$, and also,
		\begin{center}
		$\chi(y)=\prod\limits_{\nu}\chi(y_{\nu})$
		\end{center}
	\end{lemma}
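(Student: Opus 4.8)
The plan is to exploit the \emph{no small subgroups} property of the target group $\mathbb{C}^{*}$, together with the continuity of $\chi$ and the explicit description of the topology on $G$ recorded in Remark \ref{rmk1}. Concretely, I would first note the elementary fact that the neighbourhood $U=\{z\in\mathbb{C}^{*}:|z-1|<1\}$ of $1$ contains no subgroup of $\mathbb{C}^{*}$ other than $\{1\}$: if $z\neq 1$ lay in such a subgroup, then either $|z|\neq 1$, in which case the powers $z^{n}$ have modulus tending to $0$ or $\infty$ and so eventually leave $U$, or else $|z|=1$ with $z=e^{i\theta}$ and $\theta\in(-\pi/3,\pi/3)\setminus\{0\}$, in which case some power $z^{n}$ has argument outside $(-\pi/3,\pi/3)$ and hence $|z^{n}-1|\geq 1$. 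This is the structural input that forces local triviality of $\chi$.

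For the first assertion I would use continuity of $\chi$ at the identity to pull back $U$: there is an open neighbourhood $V\ni 1$ in $G$ with $\chi(V)\subseteq U$, and by Remark \ref{rmk1} we may shrink $V$ to a basic neighbourhood $\prod_{\nu}N_{\nu}$ for which $N_{\nu}=H_{\nu}$ for all $\nu$ outside some finite set $S\supseteq I_{\infty}$. Fixing any $\nu\notin S$ and embedding $H_{\nu}$ into $G$ in the $\nu$-th coordinate (with all other coordinates equal to $1$), we have $H_{\nu}\subseteq V$, so $\chi(H_{\nu})$ is a subgroup of $\mathbb{C}^{*}$ lying inside $U$, whence $\chi(H_{\nu})=\{1\}$. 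Thus $\chi$ is trivial on $H_{\nu}$ for every $\nu\notin S$, which is exactly the claimed triviality on all but finitely many $H_{\nu}$. Since any $y=(y_{\nu})_{\nu}\in G$ satisfies $y_{\nu}\in H_{\nu}$ for all but finitely many $\nu$, we immediately obtain $\chi(y_{\nu})=1$ for all but finitely many $\nu$, so the product $\prod_{\nu}\chi(y_{\nu})$ is in fact a finite product and is well defined.

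For the product formula I would fix $y=(y_{\nu})_{\nu}\in G$, enlarge $S$ to a finite set $S'$ also containing the (finitely many) indices with $y_{\nu}\notin H_{\nu}$, and factor $y=\left(\prod_{\nu\in S'}y^{(\nu)}\right)\cdot z$, where $y^{(\nu)}$ is the element with $y_{\nu}$ in the $\nu$-th slot and $1$ elsewhere, and $z$ is the element equal to $1$ on $S'$ and to $y_{\nu}\in H_{\nu}$ off $S'$. The finite factor contributes $\prod_{\nu\in S'}\chi(y_{\nu})$ by multiplicativity of $\chi$. The remaining factor $z$ lies in the compact subgroup $\prod_{\nu\notin S'}H_{\nu}$ (compact by Proposition \ref{prop1}) and is the limit, in the restricted product topology, of its finite truncations $z_{F}$ supported on a finite set $F$; since each $z_{F}$ is a finite product of elements of the $H_{\nu}$ with $\nu\notin S'$, on which $\chi$ is already known to be trivial, we get $\chi(z_{F})=1$, and continuity of $\chi$ then yields $\chi(z)=\lim_{F}\chi(z_{F})=1$. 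Combining the two pieces gives $\chi(y)=\prod_{\nu\in S'}\chi(y_{\nu})=\prod_{\nu}\chi(y_{\nu})$, the extra factors being $1$. The main obstacle — indeed the only genuinely non-formal point — is precisely this passage to the infinite tail: one must verify that the truncations $z_{F}$ converge to $z$ in the restricted-product topology (and not merely coordinatewise), so that continuity of $\chi$ may legitimately be invoked; everything else reduces to the no-small-subgroups observation and routine bookkeeping.
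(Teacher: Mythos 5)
The paper never actually proves Lemma \ref{lemma1}: it is stated as a quoted fact (it is the standard lemma on characters of restricted direct products, cf.\ Ramakrishnan--Valenza, the paper's first reference), so there is no in-paper proof to compare against, and your proposal must be judged on its own terms. Its core is correct and is the standard argument: $U=\{z\in\mathbb{C}^{*}\,:\,|z-1|<1\}$ contains no subgroup of $\mathbb{C}^{*}$ other than $\{1\}$; continuity of $\chi$ gives a basic neighbourhood $V=\prod_{\nu}N_{\nu}$ of the identity with $\chi(V)\subseteq U$ and $N_{\nu}=H_{\nu}$ for all $\nu$ outside a finite set $S$; for $\nu\notin S$ the coordinate copy of $H_{\nu}$ sits inside $V$, so $\chi(H_{\nu})$ is a subgroup of $\mathbb{C}^{*}$ contained in $U$ and is therefore trivial. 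That settles the local triviality and makes $\prod_{\nu}\chi(y_{\nu})$ a finite product. One small slip in your no-small-subgroups verification: when $|z|<1$ the positive powers do \emph{not} eventually leave $U$ (for $z=1/2$ one has $|z^{n}-1|=1-2^{-n}<1$ for every $n\geq 1$); you must pass to the negative powers $z^{-n}$, whose modulus blows up --- harmless, since a subgroup is closed under inversion, but as written that clause is false.

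The only substantive criticism concerns the tail. Having factored $y$ into a finite part and the tail $z$ supported off $S'$, you evaluate $\chi(z)$ via a net of finite truncations $z_{F}$, and you yourself flag the convergence $z_{F}\to z$ in the restricted-product topology as the one unverified obstacle. The convergence does hold (for any basic neighbourhood $\prod_{\nu}N_{\nu}$ with $N_{\nu}=H_{\nu}$ off a finite set $T$, every truncation with $F\supseteq T\setminus S'$ puts $zz_{F}^{-1}$ inside $\prod_{\nu}N_{\nu}$), so your argument can be completed as stated; but the whole detour is unnecessary. The tail subgroup $\prod_{\nu\notin S'}H_{\nu}\times\prod_{\nu\in S'}\{1\}$ is \emph{itself} contained in $V$: its coordinates are $1$ on $S'$ and lie in $H_{\nu}=N_{\nu}$ off $S'$ (recall $S\subseteq S'$). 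Hence its image under $\chi$ is a subgroup of $\mathbb{C}^{*}$ contained in $U$, so it is trivial and $\chi(z)=1$ outright --- the same one-line mechanism you used on each individual $H_{\nu}$, applied once to the entire tail, with no limits, no compactness, and no appeal to Proposition \ref{prop1} needed.
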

Using the statement of the above lemma, we establish the following important result we shall use later on:
\begin{lemma}\label{lemma2}
	Suppose that, $\chi_{\nu}\in \Hom_{cont.}(G_{\nu},\mathbb{C}^{*})$ for every $\nu$, and, $\chi_{\nu}|_{H_{\nu}}=1$ for infinitely many $\nu$. Then, $\chi=\prod\limits_{\nu} \chi_{\nu}$, and, $\chi\in \Hom_{cont.}(G_{\nu},\mathbb{C}^{*})$.
\end{lemma}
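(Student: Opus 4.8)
The plan is to establish the formula $\chi=\prod_{\nu}\chi_{\nu}$ in three stages: first that it defines a genuine function $G\to\mathbb{C}^{*}$ (i.e.\ the product makes sense pointwise), then that this function is a group homomorphism, and finally that it is continuous — which is the crux, since this is the only step where the restricted-product topology from Remark \eqref{rmk1} genuinely enters. (Note that the target group in the conclusion should read $\Hom_{cont.}(G,\mathbb{C}^{*})$, with $G$ the restricted direct product, rather than $G_{\nu}$.)

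For well-definedness, fix $y=(y_{\nu})_{\nu}\in G$. By Definition \eqref{def1} we have $y_{\nu}\in H_{\nu}$ for all but finitely many $\nu$; call the finite exceptional set $S$. By hypothesis there is a finite set $T$, which I enlarge so as to contain $I_{\infty}$, with $\chi_{\nu}|_{H_{\nu}}=1$ for every $\nu\notin T$. Then for $\nu\notin S\cup T$ we have $y_{\nu}\in H_{\nu}$ together with $\chi_{\nu}$ trivial on $H_{\nu}$, so $\chi_{\nu}(y_{\nu})=1$. Hence only the finitely many factors indexed by $S\cup T$ can differ from $1$, and $\prod_{\nu}\chi_{\nu}(y_{\nu})$ is really a finite product, giving an unambiguous element of $\mathbb{C}^{*}$. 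The homomorphism property is then purely formal: for $y=(y_{\nu})_{\nu}$ and $z=(z_{\nu})_{\nu}$ in $G$, multiplication in $G$ is componentwise, each $\chi_{\nu}$ is a homomorphism, and all the products involved are finite, so $\chi(yz)=\prod_{\nu}\chi_{\nu}(y_{\nu}z_{\nu})=\prod_{\nu}\chi_{\nu}(y_{\nu})\chi_{\nu}(z_{\nu})=\chi(y)\chi(z)$, the rearrangement being legitimate precisely because of finiteness.

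For continuity, since $\chi$ is a homomorphism it suffices to prove continuity at the identity $1=(1)_{\nu}\in G$. Fix a neighbourhood $U$ of $1$ in $\mathbb{C}^{*}$. Because $\mathbb{C}^{*}$ is a topological group and $T$ is finite, continuity of multiplication (and induction on $|T|$) yields a neighbourhood $W$ of $1$ with $W^{|T|}\subseteq U$. I then use continuity of each $\chi_{\nu}$ at the finitely many places $\nu\in T$ to choose a neighbourhood $N_{\nu}$ of $1$ in $G_{\nu}$ with $\chi_{\nu}(N_{\nu})\subseteq W$, while for $\nu\notin T$ I set $N_{\nu}=H_{\nu}$, so that $\chi_{\nu}(N_{\nu})=\{1\}$ there. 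The set $V=\prod_{\nu}N_{\nu}$ is a legitimate basic neighbourhood of $1$ in $G$ in the sense of Remark \eqref{rmk1}, exactly because $N_{\nu}=H_{\nu}$ for all $\nu$ outside the finite set $T$. For any $y\in V$ the product collapses to $\chi(y)=\prod_{\nu\in T}\chi_{\nu}(y_{\nu})\in W^{|T|}\subseteq U$, so $\chi(V)\subseteq U$, which proves continuity.

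The genuinely delicate point, and the one I expect to require the most care, is the simultaneous control of the finitely many ``ramified'' places where $\chi_{\nu}$ is nontrivial on $H_{\nu}$: one must bound each of them by a sufficiently small $W$ while letting the cofinitely many remaining places contribute only the trivial factor, and one must verify that the resulting $V$ is still open. This is precisely the feature that the restricted-product topology (which is \emph{not} the product topology) is designed to permit; the finiteness of the product and the homomorphism computation are by comparison entirely routine.
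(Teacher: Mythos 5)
Your proof is correct, and there is in fact nothing in the paper to measure it against: the paper states this lemma with no proof whatsoever (all of Section 1 is expository background quoted from the literature, e.g.\ the paper's first reference), so yours is the only argument on the table. It is also the standard one --- pointwise finiteness of the product, the formal homomorphism computation, then continuity at the identity tested against the basic neighbourhoods $\prod_{\nu}N_{\nu}$ of Remark \eqref{rmk1} --- and each step is carried out correctly, including the enlargement of $T$ to contain $I_{\infty}$, so that the archimedean places are handled by continuity of the individual $\chi_{\nu}$ rather than by triviality on a (nonexistent) $H_{\nu}$. Two further remarks. First, you were right, and indeed obliged, to correct the statement as you read it: the target must be $\Hom_{cont.}(G,\mathbb{C}^{*})$, and the hypothesis --- like Definition \eqref{def1} and Remark \eqref{rmk1}, which you also read in corrected form --- must say ``for all but finitely many $\nu$'' rather than ``for infinitely many $\nu$''; under the literal hypothesis the lemma is false, since the product $\prod_{\nu}\chi_{\nu}(y_{\nu})$ could contain infinitely many factors different from $1$ (choose $y_{\nu}\in H_{\nu}$ with $\chi_{\nu}(y_{\nu})\neq 1$ at the infinitely many bad places), so $\chi$ would not even be well defined. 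Second, the worry in your closing paragraph about verifying that $V$ is open is already discharged by Remark \eqref{rmk1}: sets of that form are declared to be a neighbourhood base at the identity in the restricted-product topology, so once you have checked $N_{\nu}=H_{\nu}$ off the finite set $T$ --- which you did --- no further openness verification is needed.
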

Suppose we consider the \textit{Pontryagin Dual} of the restricted direct product $G$ defined as in \textit{Definition\eqref{def1}}. Consider the dual groups $\hat{G_{\nu}}$ of $G_{\nu}$. Then the following theorem gives us the following relation between $\hat{G}$ and $\hat{G_{\nu}}$ as:
\begin{thm}
	\begin{eqnarray}
	\hat{G}\cong {\prod\limits_{\nu}}'\hat{G_{\nu}}
	\end{eqnarray}
	Where, the restricted direct product ${\prod\limits_{\nu}}'\hat{G_{\nu}}$ is with respect to the subgroups $K(G_{\nu},H_{\nu})$, where,
	\begin{eqnarray}\label{3}
	K(G_{\nu},H_{\nu}):=\{\chi_{\nu}\in \Hom_{cont.}(G_{\nu},\mathbb{C}^{*})\mbox{ }|\mbox{ }\chi_{\nu}|
_{H_{\nu}}=1\}	
	\end{eqnarray}
\end{thm}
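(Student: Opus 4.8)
The plan is to write down an explicit map in each direction, use Lemmas~\ref{lemma1} and~\ref{lemma2} to see that these maps are well defined and mutually inverse, and only then promote the resulting algebraic isomorphism to a topological one. First I would define $\Phi\colon\hat{G}\to{\prod_\nu}'\hat{G_\nu}$ by sending a character $\chi$ to the family $(\chi_\nu)_\nu$ of its restrictions $\chi_\nu:=\chi\circ\iota_\nu$, where $\iota_\nu\colon G_\nu\hookrightarrow G$ is the inclusion placing $x_\nu$ in the $\nu$-th coordinate and the identity elsewhere (this lands in $G$ because $1\in H_\mu$ for every $\mu$). Lemma~\ref{lemma1} tells us that $\chi$ is trivial on all but finitely many $H_\nu$, so $\chi_\nu|_{H_\nu}=1$ for almost all $\nu$; by the definition~\eqref{3} this says $\chi_\nu\in K(G_\nu,H_\nu)$ for almost all $\nu$, and hence $(\chi_\nu)_\nu$ genuinely lies in the restricted direct product taken with respect to the subgroups $K(G_\nu,H_\nu)$.

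In the reverse direction I would define $\Psi\colon{\prod_\nu}'\hat{G_\nu}\to\hat{G}$ by $(\chi_\nu)_\nu\mapsto\prod_\nu\chi_\nu$. Any element of the restricted product satisfies $\chi_\nu|_{H_\nu}=1$ for almost all $\nu$, so Lemma~\ref{lemma2} guarantees that $\prod_\nu\chi_\nu$ is a well-defined continuous character of $G$ and $\Psi$ makes sense. That $\Phi$ and $\Psi$ are mutually inverse is then essentially formal: $\Psi\circ\Phi=\mathrm{id}$ is exactly the factorization $\chi(y)=\prod_\nu\chi(y_\nu)$ recorded in Lemma~\ref{lemma1}, while $\Phi\circ\Psi=\mathrm{id}$ follows because restricting $\prod_\mu\chi_\mu$ along $\iota_\nu$ kills every factor with $\mu\neq\nu$ (each evaluates to $1$ on the identity) and returns $\chi_\nu$. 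Compatibility with the group operations is immediate, since multiplication of characters is pointwise on both sides.

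The substantive part, and the step I expect to present the main difficulty, is upgrading this algebraic isomorphism to a homeomorphism between the compact-open topology on $\hat{G}$ and the restricted-product topology of Remark~\ref{rmk1} on ${\prod_\nu}'\hat{G_\nu}$. For the continuity of $\Phi$ I would start from a basic target neighbourhood $\prod_\nu N_\nu$ of the trivial character, with $N_\nu$ a compact-open neighbourhood of $1$ in $\hat{G_\nu}$ for the finitely many exceptional indices and $N_\nu=K(G_\nu,H_\nu)$ otherwise, and show its $\Phi$-preimage contains a set $W(C,U)=\{\chi:\chi(C)\subseteq U\}$. Here I take $C$ to be the compact subgroup $\prod_\nu H_\nu$ (compact by Proposition~\ref{prop1}) enlarged by the finitely many compacta coming from the exceptional $N_\nu$, and $U$ a neighbourhood of $1$ in $\C^{\ast}$ small enough to contain no nontrivial subgroup of the unit circle. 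The point is that $\chi_\nu(H_\nu)$ is always a compact subgroup of $\C^{\ast}$, hence lies on the unit circle, so confining it to such a $U$ forces $\chi_\nu|_{H_\nu}=1$ exactly; this is the device that converts mere closeness to $1$ into the honest triviality demanded by $K(G_\nu,H_\nu)$. Continuity of $\Psi$ runs dually: given $W(C,U)$ I invoke Proposition~\ref{prop1} to write $C\subseteq\prod_\nu K_\nu$ with $K_\nu=H_\nu$ for almost all $\nu$, and then the finitely many constraints on $\chi_\nu|_{K_\nu}$ together with $\chi_\nu|_{H_\nu}=1$ for the rest cut out a basic restricted-product neighbourhood whose $\Psi$-image lands inside $W(C,U)$.

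Granting both continuities, $\Phi$ is a continuous bijective homomorphism with continuous inverse $\Psi$, that is, a topological isomorphism, which is precisely the asserted identification $\hat{G}\cong{\prod_\nu}'\hat{G_\nu}$. Apart from routine bookkeeping, the only genuine care lies in the topological matching of the preceding paragraph, where the interplay between ``almost all coordinates in $H_\nu$'' on the source side and ``almost all characters in $K(G_\nu,H_\nu)$'' on the target side must be tracked precisely, and in particular in the no-small-subgroups argument needed to read off exact triviality on each $H_\nu$.
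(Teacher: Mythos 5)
Your proposal cannot be compared against the paper's own argument for the simple reason that the paper does not give one: this theorem is stated bare, as an imported structural fact about Pontryagin duals of restricted direct products (it is essentially Theorem 5-4 of Ramakrishnan--Valenza, the paper's first reference, where the proof runs along exactly the lines you propose). Judged on its own merits, your proof is the standard one and its skeleton is sound: $\Phi$ lands in the restricted product by Lemma~\ref{lemma1}, $\Psi$ is well defined by Lemma~\ref{lemma2}, the two are mutually inverse homomorphisms via the factorization $\chi(y)=\prod_\nu\chi(y_\nu)$, and the real content is the topological identification, for which you correctly isolate the key device: $\chi_\nu(H_\nu)$ is a compact subgroup of $\C^{*}$, hence lies on the unit circle, so a neighbourhood $U$ of $1$ containing no nontrivial subgroup of the circle converts $\chi_\nu(H_\nu)\subseteq U$ into the exact triviality $\chi_\nu|_{H_\nu}=1$ required by membership in $K(G_\nu,H_\nu)$. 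Two places where your sketch should be firmed up in a full write-up: in the continuity of $\Phi$, the compact set should be taken of the product form $\prod_{\nu\in S}C_\nu\times\prod_{\nu\notin S}H_\nu$ with $S$ finite, the $C_\nu$ chosen so that $\{\chi_\nu : \chi_\nu(C_\nu)\subseteq U_\nu\}\subseteq N_\nu$ for the exceptional indices, and $U$ shrunk to lie inside the finitely many $U_\nu$ as well as in a no-small-subgroups neighbourhood; and in the continuity of $\Psi$, after invoking Proposition~\ref{prop1} to bound $C\subseteq\prod_\nu K_\nu$, the neighbourhood constraining the finitely many exceptional $\chi_\nu|_{K_\nu}$ must be chosen so that a product of $|S|$ of its elements still lies in $U$ (both maps being homomorphisms, continuity at the identity suffices, which is what your basic-neighbourhood computations implicitly use). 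These are routine refinements, not gaps in the idea; your proof is correct and supplies an argument the paper omits.
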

Next, we define measures on $G$ and $\hat{G}$.

\subsection{Measures on Restricted Direct Products and their Duals}
\begin{prop}\label{prop2}
	Assume $G={\prod\limits_{\nu}}'G_{\nu}$ to be the restricted direct product of locally compact groups $G_{\nu}$ with respect to the family of compact subgroups $H_{\nu}\subseteq G_{\nu}$ for every $\nu\notin I_{\infty}$, we define the \textit{(left) Haar Measure} on $G_{\nu}$ to be $dg_{\nu}$ and normalize it using the condition,
	\begin{center}
	$\int\limits_{H_{\nu}}dg_{\nu}=1\mbox{, for almost all }\nu\notin I_{\infty}.$
	\end{center}
	Then, there exists a \textit{Haar Measure }$dg$ on $G$ sucth that, the restriction $dg_{S}$ of $dg$ to the group,
	\begin{center}
	$G_{S}=\prod\limits_{\nu\in S}G_{\nu}\times \prod\limits_{\nu\notin S}H_{\nu}$
	\end{center}
	For every finite set $S(\supseteq I_{\infty})$ of indices, is exactly equal to the \textit{product measure} on $G_{S}$, and moreover, the \textit{Haar Measure }$dg$ on $G$ is unique.
\end{prop}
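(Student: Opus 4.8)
The plan is to realize $G$ as an increasing union of the open subgroups $G_S$, to put the obvious product measure on each $G_S$, and then to glue these together into a single measure on $G$, using the consistency of the construction under enlarging $S$. Before starting I would first dispose of the normalization subtlety: the hypothesis only gives $\int_{H_\nu}dg_\nu=1$ for \emph{almost all} $\nu$, so let $S_0$ be the finite set consisting of $I_\infty$ together with the finitely many exceptional indices at which this mass is not $1$, and restrict attention throughout to finite index sets $S\supseteq S_0$. These $S$ are cofinal, so nothing is lost. For each such $S$ the group $\prod_{\nu\notin S}H_\nu$ is compact by Tychonoff, each factor carries the probability measure $dg_\nu|_{H_\nu}$, and their product is exactly the normalized Haar measure $dk_S$ on $\prod_{\nu\notin S}H_\nu$. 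I then define $dg_S:=\bigl(\prod_{\nu\in S}dg_\nu\bigr)\times dk_S$ on $G_S=\prod_{\nu\in S}G_\nu\times\prod_{\nu\notin S}H_\nu$; since a finite product of Haar measures times the Haar measure of a compact group is a Haar measure on the product, $dg_S$ is a (left) Haar measure on the locally compact group $G_S$.

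The next step is to record three structural facts. First, each $G_S$ is an \emph{open} subgroup of $G$: taking $N_\nu=G_\nu$ for $\nu\in S$ and $N_\nu=H_\nu$ (open, since $H_\nu$ is compact open) for $\nu\notin S$ exhibits $G_S$ itself as a basic open neighbourhood of the form described in Remark~\ref{rmk1}. Second, $G=\bigcup_{S\supseteq S_0}G_S$, because any $x=(x_\nu)_\nu\in G$ has $x_\nu\in H_\nu$ for all but finitely many $\nu$, so $x\in G_S$ once $S$ absorbs those finitely many indices. Third, and crucially, every compact subset $C\subseteq G$ lies inside a \emph{single} $G_S$: by the compactness criterion of Proposition~\ref{prop1} one has $C\subseteq\prod_\nu K_\nu$ with $K_\nu$ compact and $K_\nu=H_\nu$ for almost all $\nu$, whence $C\subseteq G_S$ for $S$ taken large enough to contain $S_0$ and the finitely many $\nu$ with $K_\nu\neq H_\nu$.

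With these in hand I would prove the key compatibility statement: if $S\subseteq S'$ then $G_S$ is an open subgroup of $G_{S'}$ and the restriction of $dg_{S'}$ to $G_S$ equals $dg_S$. This is a Fubini computation — the extra factors indexed by $\nu\in S'\setminus S$ are integrated over $H_\nu$, each contributing the mass $\int_{H_\nu}dg_\nu=1$, so the measure on the smaller group is unchanged. Granting this, I define the functional on $C_c(G)$ by $\int_G f\,dg:=\int_{G_S}f\,dg_S$, where $S$ is any finite set (containing $S_0$) with $\mathrm{supp}(f)\subseteq G_S$; such an $S$ exists by the third structural fact, and well-definedness is precisely the compatibility statement applied to two choices $S,S'$ via their union. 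This is a positive linear functional, so Riesz representation yields a Radon measure $dg$ on $G$ whose restriction to each $G_S$ is $dg_S$ by construction. Left-invariance is then checked locally: for $g\in G$ and $f\in C_c(G)$, choose $S$ with $g\in G_S$ and $\mathrm{supp}(f)\subseteq G_S$; the translate $f(g^{-1}\,\cdot\,)$ is supported in $g\cdot G_S\subseteq G_S$, and left-invariance of $dg_S$ on $G_S$ gives the required invariance on $G$.

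Finally, uniqueness follows from uniqueness of Haar measure on the locally compact group $G$ up to a positive scalar: any two Haar measures differ by a constant, and the requirement that the restriction to even one $G_S$ coincide with the product measure $dg_S$ pins that constant to $1$. I expect the main obstacle to be the compatibility/Fubini step together with the passage from ``supported in some $G_S$'' to a globally defined invariant measure — in particular one must invoke Proposition~\ref{prop1} at exactly the right place to guarantee that compact supports live in a single $G_S$ (this is what makes the inductive-limit gluing legitimate rather than merely formal), and one must handle the left-translation argument by always enlarging $S$ so that both $g$ and the support of $f$ are captured simultaneously.
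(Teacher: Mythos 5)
Your proof is correct, but there is nothing in the paper to compare it against: Proposition~\ref{prop2} is stated there as background material, with no proof at all (it is the standard existence--uniqueness result for Haar measure on a restricted direct product, found in the paper's first reference, Ramakrishnan--Valenza, and going back to Tate's thesis). Your construction is the standard one: product measures $dg_S$ on the open subgroups $G_S$, compatibility under enlargement of $S$ via Fubini, gluing through a positive linear functional on $C_c(G)$ and the Riesz representation theorem, with left-invariance and uniqueness checked exactly as you describe; the three structural facts you isolate (openness of $G_S$, $G=\bigcup_S G_S$, and every compact subset of $G$ lying in a single $G_S$ via Proposition~\ref{prop1}) are indeed the load-bearing steps. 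Two small remarks. First, you tacitly (and correctly) read ``$x_\nu\in H_\nu$ for infinitely many $\nu$'' in Definition~\ref{def1} and Remark~\ref{rmk1} as ``for all but finitely many $\nu$''; under the paper's literal wording your claims that $G_S$ is open and that $G$ is exhausted by the $G_S$ would fail, so the typo is worth flagging. Second, the proposition asserts the restriction property for \emph{every} finite $S\supseteq I_\infty$, while your argument delivers it only for $S\supseteq S_0$ (the set absorbing the finitely many places with $\mathrm{Vol}(H_\nu)\neq 1$); the general case does follow, by restricting $dg_{S\cup S_0}$ to the open subgroup $G_S$ and running your Fubini computation once more, where the exceptional factors now contribute their actual volumes and the resulting measure is still the product measure on $G_S$ --- but this deserves a sentence rather than being absorbed into the word ``cofinal''.
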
	
By the virtue of the above theorem, we can conclude that, 
\begin{eqnarray}\label{2}
dg=\prod\limits_{\nu}dg_{\nu}\mbox{\hspace{20pt}is well-defined.}
\end{eqnarray}	
And, $dg$ is the \textit{(left) Haar Measure} on $G$. This is also called the measure induced by the \textit{factor measures}( precisely $dg_{\nu}$ for every $\nu$ ).\\\\\
Following result reflects some of the important properties of the left Haar Measure $dg$ on $G$:
\begin{prop}\label{prop3}
	The following holds true for the \textit{(left) haar Measure} on a restricted direct product $G$ of locally compact groups $G_{\nu}$ with respect to the compact subgroups $H_{\nu}$:
	\begin{enumerate}  [(i)]
		\item Suppose $f$ is integrable on $G$. Then,
		\begin{center}
		$\int\limits_{G}f(g)dg=lim_{S}\int\limits_{G_{S}}f(g_{s})dg_{s}$
		\end{center}
		$S$ being any finite set of indices containing $I_{\infty}$.\\
		If $f$ is assumed only to be continuous, then the above holds true provided that, the integral can take values at infinity.
		\item Given any finite set of indices $S_{0}(\supseteq I_{\infty})$ and all such $\nu$ such that, $Vol(H_{\nu},dg_{\nu})\neq 1$, suppose we have a continuous integrable function $f_{\nu}$ on $G_{\nu}$ corresponding to each such index $\nu$ such that,
		\begin{center}
		 $f_{\nu}|_{H_{\nu}}=1\mbox{, }\forall\mbox{  }\nu\notin S_{0}$
		 \end{center}
		 Then the relation $f(g)=\prod\limits_{\nu}f_{\nu}(g_{\nu})$,  $\forall$  $g:=(g_{\nu})_{\nu}\in G$ is well-defined and continuous on $G$.\\
		 Suppose $S$ be any finite set of indices ( $S$ can be $S_{0}$ ), then,
		 \begin{center}
		 $\int\limits_{G_{S}}f(g_{S})dg_{S}=\prod\limits_{\nu\in S}(\int\limits_{G_{\nu}}f_{\nu}(g_{\nu})dg_{\nu})$
		 \end{center}
		 More generally, 
		 \begin{center}
		 $\int\limits_{G}f(g)dg=\prod\limits_{\nu}(\int\limits_{G_{\nu}}f_{\nu}(g_{\nu})dg_{\nu})$
		 \end{center}
		 And, also $f\in L^{1}(G)$, provided, $\prod\limits_{\nu}(\int\limits_{G_{\nu}}f_{\nu}(g_{\nu})dg_{\nu})<\infty$.\label{4}
		 \item $\{f_{\nu}\}$ and $f$ be mentioned a priori, such that, moreover, $f_{\nu}$ is the \textit{characteristic function} for $H_{\nu}$ for almost all $\nu$. Then, $f$ is integrable, and, in case of abelian groups, the \textit{Fourier Transform} of $f$, i.e., $\hat{f}$ is likewise integrable and,
		 \begin{center}
		 $\hat{f_{\nu}}(g)=\prod\limits_{\nu}\hat{f_{\nu}}(g_{\nu}).$
		 \end{center}
	\end{enumerate}
\end{prop}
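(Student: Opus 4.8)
The plan is to treat all three parts as consequences of a single structural fact established in \textit{Proposition\eqref{prop2}}: the group $G$ is the increasing, directed union of the open subgroups $G_{S}$ as $S$ ranges over the finite index sets containing $I_{\infty}$, and the Haar measure $dg$ restricts on each $G_{S}$ to the product measure $dg_{S}$. Since every $G_{S}$ is open (hence measurable) and $\bigcup_{S} G_{S}=G$, for each $g\in G$ the indicator $\mathbf{1}_{G_{S}}(g)$ increases to $1$ along the directed family of finite $S$. For part (i) I would first take $f\ge 0$ (the continuous case permitting the value $+\infty$) and write $\int_{G_{S}} f\,dg_{S}=\int_{G} f\,\mathbf{1}_{G_{S}}\,dg$, which is legitimate precisely because $dg|_{G_{S}}=dg_{S}$; the Monotone Convergence Theorem then gives $\int_{G} f\,dg=\lim_{S}\int_{G_{S}} f\,dg_{S}$. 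For general integrable $f$ I would decompose into real and imaginary, positive and negative parts, or equivalently invoke Dominated Convergence with dominating function $|f|\in L^{1}(G)$, since $f\,\mathbf{1}_{G_{S}}\to f$ pointwise. This settles part (i).

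For part (ii), well-definedness of $f(g)=\prod_{\nu} f_{\nu}(g_{\nu})$ is immediate: any $g=(g_{\nu})_{\nu}\in G$ has $g_{\nu}\in H_{\nu}$ for all but finitely many $\nu$, and for such $\nu$ lying outside $S_{0}$ the hypothesis $f_{\nu}|_{H_{\nu}}=1$ forces $f_{\nu}(g_{\nu})=1$, so only finitely many factors differ from $1$ and the product is genuinely finite. Continuity follows by restricting to each open $G_{S}$ with $S\supseteq S_{0}$, where $f(g_{S})=\prod_{\nu\in S} f_{\nu}(g_{\nu})$ is a finite product of the continuous maps $f_{\nu}\circ\mathrm{pr}_{\nu}$; as the $G_{S}$ form an open cover, $f$ is continuous on $G$. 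The finite-level integral is then Fubini for the product measure: $\int_{G_{S}} f\,dg_{S}=\prod_{\nu\in S}\big(\int_{G_{\nu}} f_{\nu}\,dg_{\nu}\big)\cdot\prod_{\nu\notin S}\big(\int_{H_{\nu}} f_{\nu}\,dg_{\nu}\big)$, and the second product equals $1$ because for $\nu\notin S\supseteq S_{0}$ one has simultaneously $f_{\nu}|_{H_{\nu}}=1$ and $\mathrm{Vol}(H_{\nu},dg_{\nu})=1$ — the latter from the normalization of \textit{Proposition\eqref{prop2}} together with the fact that $S_{0}$ contains every $\nu$ for which $\mathrm{Vol}(H_{\nu})\ne 1$. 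The global identity $\int_{G} f\,dg=\prod_{\nu}\big(\int_{G_{\nu}} f_{\nu}\,dg_{\nu}\big)$ then drops out of part (i) applied first to $|f|$ (to test integrability) and then to $f$: the partial products $\prod_{\nu\in S}\int_{G_\nu}|f_{\nu}|$ increase to $\prod_{\nu}\int_{G_\nu}|f_{\nu}|$, so finiteness of the latter is exactly the criterion $f\in L^{1}(G)$.

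Part (iii) is where the substantive work lies. Integrability of $f$ is automatic from part (ii): when $f_{\nu}=\mathbf{1}_{H_{\nu}}$ for almost all $\nu$ we get $\int_{G_{\nu}}|f_{\nu}|\,dg_{\nu}=\mathrm{Vol}(H_{\nu})=1$ for almost all $\nu$, so the defining product is finite. For the Fourier transform I would invoke the duality theorem $\hat{G}\cong{\prod_{\nu}}'\hat{G_{\nu}}$ taken with respect to the annihilators $K(G_{\nu},H_{\nu})$ of \eqref{3}: a general $\chi\in\hat{G}$ factors by \textit{Lemma\eqref{lemma1}} as $\chi=\prod_{\nu}\chi_{\nu}$ with $\chi_{\nu}\in K(G_{\nu},H_{\nu})$ for almost all $\nu$. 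Setting $h_{\nu}:=f_{\nu}\,\overline{\chi_{\nu}}$, for almost all $\nu$ we have simultaneously $f_{\nu}=\mathbf{1}_{H_{\nu}}$ and $\chi_{\nu}|_{H_{\nu}}=1$, hence $h_{\nu}=\mathbf{1}_{H_{\nu}}$; thus $\{h_{\nu}\}$ satisfies the hypotheses of part (ii), and $\hat{f}(\chi)=\int_{G} f\,\overline{\chi}\,dg=\prod_{\nu}\int_{G_{\nu}} f_{\nu}\,\overline{\chi_{\nu}}\,dg_{\nu}=\prod_{\nu}\hat{f_{\nu}}(\chi_{\nu})$, the claimed factorization.

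The hard part will be integrability of $\hat{f}$ on $\hat{G}$, for which I expect to need the explicit local computation $\hat{f_{\nu}}=\mathrm{Vol}(H_{\nu})\,\mathbf{1}_{H_{\nu}^{\perp}}=\mathbf{1}_{K(G_{\nu},H_{\nu})}$ valid for almost all $\nu$. This rests on the orthogonality relation asserting that $\int_{H_{\nu}}\overline{\chi_{\nu}}\,dg_{\nu}$ equals $\mathrm{Vol}(H_{\nu})$ when $\chi_{\nu}$ is trivial on $H_{\nu}$ and vanishes otherwise (a character nontrivial on a compact group integrates to $0$), combined with the identification of the annihilator $H_{\nu}^{\perp}$ with $K(G_{\nu},H_{\nu})$. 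Granting this, the family $\{\hat{f_{\nu}}\}$ is again the characteristic function of the compact open subgroup $K(G_{\nu},H_{\nu})\subseteq\hat{G_{\nu}}$ for almost all $\nu$, so the integrability criterion just proved for $f$, now read on the dual restricted product ${\prod_{\nu}}'\hat{G_{\nu}}$, delivers $\hat{f}\in L^{1}(\hat{G})$. The principal obstacle throughout is the careful bookkeeping of the ``almost all $\nu$'' conditions, so that each interchange of integration with an infinite product is reduced to a genuinely finite product, and the verification that the normalizations $\mathrm{Vol}(H_{\nu})=1$ and $\mathrm{Vol}(K(G_{\nu},H_{\nu}))=1$ are mutually compatible under the duality.
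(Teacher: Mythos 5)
The paper itself offers no proof of this proposition: it is quoted as standard background on restricted direct products (essentially Propositions 5-5 and 5-6 of Ramakrishnan--Valenza, which the paper cites), so there is no internal argument to compare yours against. Taken on its own terms, your outline follows that standard route and is structurally correct: part (i) from the exhaustion $G=\bigcup_{S}G_{S}$ together with $dg|_{G_{S}}=dg_{S}$; part (ii) by Fubini on $G_{S}$ plus the normalization $\mathrm{Vol}(H_{\nu})=1$ off $S_{0}$; part (iii) by reducing $\hat{f}(\chi)=\prod_{\nu}\hat{f_{\nu}}(\chi_{\nu})$ to part (ii) via $h_{\nu}=f_{\nu}\overline{\chi_{\nu}}$ and the local computation $\hat{f_{\nu}}=\mathrm{Vol}(H_{\nu})\,\mathbf{1}_{K(G_{\nu},H_{\nu})}$. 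Your observation that the partial products over $S\supseteq S_{0}$ are monotone because every omitted factor is $\geq 1$ is exactly the right point for the $L^{1}$ criterion.

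Two steps need tightening. First, in part (i) you invoke monotone/dominated convergence ``along the directed family,'' but the family of finite sets $S$ is in general uncountable and MCT/DCT are theorems about sequences; for nets of measurable functions they can fail. The standard repair is inner regularity of Haar measure: given $\epsilon>0$ choose a compact $K$ with $\int_{G\setminus K}|f|\,dg<\epsilon$; the open sets $G_{S}$ cover $K$ and are directed upward, so $K\subseteq G_{S_{1}}$ for some $S_{1}$, whence $\bigl|\int_{G}f\,dg-\int_{G_{S}}f\,dg_{S}\bigr|<\epsilon$ for all $S\supseteq S_{1}$. Second, and more substantively, in part (iii) your appeal to the dual-side integrability criterion requires \emph{every} factor $\int_{\hat{G_{\nu}}}|\hat{f_{\nu}}|\,d\chi_{\nu}$ to be finite. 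For the almost-all $\nu$ with $f_{\nu}=\mathbf{1}_{H_{\nu}}$ this is your orthogonality computation, but for the finitely many exceptional $\nu$ the hypotheses give only that $f_{\nu}$ is continuous and integrable, and the Fourier transform of a continuous $L^{1}$ function on a locally compact abelian group need not be integrable. So $\hat{f}\in L^{1}(\hat{G})$ does not follow as you have argued; you must either add the hypothesis that $\hat{f_{\nu}}\in L^{1}(\hat{G_{\nu}})$ at the exceptional places (which is how the standard sources frame it, and which holds automatically for the Schwartz--Bruhat functions the paper uses later) or note explicitly that the proposition is being read with that assumption. The factorization $\hat{f}(\chi)=\prod_{\nu}\hat{f_{\nu}}(\chi_{\nu})$ itself, and parts (i)--(ii), are unaffected.
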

Now, Assuming the expression of $dg$ in \eqref{2}, we normalize each $dg_{\nu}$ for every such index $\nu$ such that, $Vol(H_{\nu})=1$ for almost all $\nu$. And, suppose,
\begin{center}
	$d\chi_{\nu}=\hat{dg_{\nu}}$,  for every $\nu$.
\end{center}
	Where, $d\chi_{\nu}$ denotes the \textit{dual measure} of $dg_{\nu}$ on $\hat{G_{\nu}}$ for every $\nu$.\\
	Hence for each $\nu$ and $f\in L^{1}(G_{\nu})$, we have, using description of \textit{Forurier Transform},
	\begin{center}
		$\hat{f_{\nu}}(\chi_{\nu})=\int\limits_{G_{\nu}}f_{\nu}(g_{\nu})\overline{\chi_{\nu}}(g_{\nu})dg_{\nu}.$
	\end{center}
	In case of $f_{\nu}$ being the \textit{characteristic function} of $H_{\nu}$, hence being integrable and of positive type on $G_{\nu}$ for every $\nu$, we deduce using \textit{orthogonality relations} that,
	\begin{eqnarray}
	\hat{f_{\nu}}(\chi_{\nu})=\int\limits_{H_{\nu}}\chi_{\nu}(g_{\nu})dg_{\nu}=\left\{
			\begin{array}{cc}
			Vol(H_{\nu})\mbox{ }, &\mbox{ if }\chi_{\nu}|_{H_{\nu}}=1,\\
			0\mbox{ }, &\mbox{ otherwise }.
			\end{array}
			\right. 
	\end{eqnarray}
Consequently, the group, 
\begin{eqnarray}
H_{\nu}^{*}=\{\chi_{\nu}\in \Hom_{cont.}(G_{\nu},\mathbb{C}^{*})\mbox{ }|\mbox{ \hspace*{5pt} }\chi_{\nu}|_{H_{\nu}}=1\}=K(G_{\nu},H_{\nu})\mbox{  },\mbox{using }\eqref{3}
\end{eqnarray}	
	is a subgroup of $\hat{G_{\nu}}$.\\
	Therefore, applying \textit{Proposition 1.3.2.\eqref{4}}, we thus obtain the \textit{Fourier Inversion Formula }stated as:
	\begin{thm}\label{thm1}
		(Fourier Inversion Formula)  We have, for every $f\in V^{1}(G)$ (Space of $L^{1}$ functions in $V(G)$, where $V(G)$ denotes the complex span of continuous functions on $G$ of positive type),\begin{eqnarray}
		f(g)=\int\limits_{\hat{G}}\hat{f}(\chi)\chi(g)d\chi 
		\end{eqnarray}
		Where, $\hat{G}$ is the \textit{Pontryagin Dual Group} of $G$ and $d\chi$ is the \textit{dual measure }on $\hat{G}$ to the Haar Measure $dg$ on $G$.
	\end{thm}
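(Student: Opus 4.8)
\section*{Proof proposal}

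The plan is to leverage the classical Fourier Inversion Formula on each factor $G_{\nu}$ together with the product structures of both $G=\prod'_{\nu}G_{\nu}$ and its Pontryagin dual $\hat{G}\cong\prod'_{\nu}\hat{G_{\nu}}$ established above, reducing the global identity to a factor-by-factor statement on a dense class of functions. Concretely, I would first prove the formula for \emph{decomposable} functions $f(g)=\prod_{\nu}f_{\nu}(g_{\nu})$, where each $f_{\nu}\in V^{1}(G_{\nu})$ and $f_{\nu}=\mathbf{1}_{H_{\nu}}$ (the characteristic function of $H_{\nu}$) for almost all $\nu$, and then extend to all of $V^{1}(G)$ by linearity together with a density argument.

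First I would record the normalization needed on the dual side. Since $H_{\nu}$ is compact open in $G_{\nu}$ with $\mathrm{Vol}(H_{\nu},dg_{\nu})=1$ for almost all $\nu$, its annihilator $H_{\nu}^{*}=K(G_{\nu},H_{\nu})$ is a compact open subgroup of $\hat{G_{\nu}}$, and the defining duality relation between a Haar measure and its dual forces $\mathrm{Vol}(H_{\nu}^{*},d\chi_{\nu})\cdot\mathrm{Vol}(H_{\nu},dg_{\nu})=1$, whence $\mathrm{Vol}(H_{\nu}^{*},d\chi_{\nu})=1$ for almost all $\nu$. This is exactly the normalization under which the restricted product $\hat{G}\cong\prod'_{\nu}\hat{G_{\nu}}$, taken with respect to the subgroups $H_{\nu}^{*}$, carries the product Haar measure $d\chi=\prod_{\nu}d\chi_{\nu}$ by Proposition \ref{prop2}, so that this $d\chi$ indeed coincides with the dual measure of $dg$.

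For a decomposable $f$ as above, Proposition \ref{prop3} (part (iii)) gives $\hat{f}=\prod_{\nu}\hat{f_{\nu}}$, and by the orthogonality computation displayed just before the statement one has $\hat{f_{\nu}}=\mathrm{Vol}(H_{\nu})\,\mathbf{1}_{H_{\nu}^{*}}$ for almost all $\nu$. Applying the multiplicativity of integration over restricted products (Proposition \ref{prop3}, part (ii)) to the integrand $\chi\mapsto\hat{f}(\chi)\chi(g)=\prod_{\nu}\hat{f_{\nu}}(\chi_{\nu})\chi_{\nu}(g_{\nu})$ factors the right-hand side as
\begin{equation}
\int_{\hat{G}}\hat{f}(\chi)\,\chi(g)\,d\chi
=\prod_{\nu}\int_{\hat{G_{\nu}}}\hat{f_{\nu}}(\chi_{\nu})\,\chi_{\nu}(g_{\nu})\,d\chi_{\nu}
=\prod_{\nu}f_{\nu}(g_{\nu})=f(g),
\end{equation}
where the middle equality is the \emph{local} Fourier Inversion Formula on each $G_{\nu}$ (classical Pontryagin duality, valid for $f_{\nu}\in V^{1}(G_{\nu})$), and all but finitely many factors equal $1$ because $f_{\nu}=\mathbf{1}_{H_{\nu}}$ there; note that the normalization above is precisely what makes the local inversion of $\mathrm{Vol}(H_{\nu})\,\mathbf{1}_{H_{\nu}^{*}}$ return $\mathbf{1}_{H_{\nu}}$. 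This establishes the identity for decomposable $f$.

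It remains to pass from decomposable functions to an arbitrary $f\in V^{1}(G)$. Finite linear combinations of decomposable functions of the above type are dense in $V^{1}(G)$ in the relevant $L^{1}$-plus-uniform topology on functions of positive type, and both sides of the asserted identity are continuous linear functionals of $f$ in that topology, so the equality propagates to the whole space. The main obstacle is exactly this approximation step together with the justification of interchanging the infinite product with integration: one must check that the decomposable functions genuinely span a dense subspace of $V^{1}(G)$ and that convergence in that topology is preserved both by the global Fourier transform and by integration against $d\chi$. Controlling the tail of the infinite product uniformly—so that Proposition \ref{prop3} (parts (ii)--(iii)) applies and the product of local integrals converges to the global one—is where the care is needed; once the local inversion and the product measure are in place, the factorwise computation is routine.
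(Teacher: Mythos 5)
The paper itself offers no proof of this theorem: since $d\chi$ is by definition the dual measure of $dg$, the statement as written is exactly the classical Fourier inversion theorem for locally compact abelian groups (Rudin's \emph{Fourier Analysis on Groups}, which is in the bibliography), and the paper simply quotes it; the restricted-product content --- that this dual measure coincides with the product measure $\prod_{\nu}d\chi_{\nu}$, and that $\mathrm{Vol}(H_{\nu})\cdot\mathrm{Vol}(H_{\nu}^{*})=1$ --- is extracted \emph{afterwards} as a consequence. Your factorwise computation is correct and is precisely the computation relevant to that restricted-product content: the normalization $\mathrm{Vol}(H_{\nu}^{*})=1$ for almost all $\nu$, the identity $\hat{f_{\nu}}=\mathrm{Vol}(H_{\nu})\,\mathbf{1}_{H_{\nu}^{*}}$, and the local inversion returning $\mathbf{1}_{H_{\nu}}$ (via biduality $(H_{\nu}^{*})^{*}=H_{\nu}$) are all right.

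The genuine gap is the final step, which you yourself flag as the main obstacle: extending from decomposable functions to all of $V^{1}(G)$ by density. A pointwise identity does not pass through an $L^{1}$-type limit unless both sides are continuous in the topology used, and the map $f\mapsto\int_{\hat{G}}\hat{f}(\chi)\chi(g)\,d\chi$ is not visibly continuous in your ``$L^{1}$-plus-uniform'' topology: neither $\|f\|_{1}$ nor $\|f\|_{\infty}$ controls $\|\hat{f}\|_{L^{1}(\hat{G})}$, and the natural bound $\|\hat{f}\|_{1}=f(0)$ for positive-type $f$ is itself a consequence of the inversion theorem, so invoking it here is circular. Density of the span of decomposable functions in $V^{1}(G)$ --- as opposed to in $L^{1}(G)$ --- is likewise asserted rather than established. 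The repair makes your computation sufficient instead of deficient: take the classical inversion theorem on the single LCA group $G$ as known (that is what the stated theorem literally is, relative to the dual measure), note that any two Haar measures on $\hat{G}$ are proportional, and observe that validity of the inversion formula for \emph{one} nonzero decomposable $f$ with respect to the product measure $\prod_{\nu}d\chi_{\nu}$ --- exactly what your factorwise computation proves --- forces the proportionality constant between the product measure and the dual measure to be $1$. Hence the product measure \emph{is} the dual measure, and inversion for every $f\in V^{1}(G)$ follows from the abstract theorem with no density argument at all.
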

	Using the \textit{Fourier Inversion formula } and the \textit{Proposition\eqref{prop3}}, we conclude that,
	\begin{center}
		$Vol(H_{\nu}).Vol(H_{\nu}^{*})=1$
	\end{center}
Where $Vol(H_{\nu})$ is relative to the measure $dg_{\nu}$, and $Vol(H_{\nu}^{*})$ is relative to the measure $d\chi_{\nu}$; such that, $Vol(H_{\nu}^{*})=1$  for almost all $\nu$, and, $d\chi=\hat{dg}$ as mentioned earlier.\\\\

\subsection{Valuations on Number Fields}
\begin{dfn}\label{def3}
	(Absolute Value) An \textit{Absolute Value} of a field $K$ is a funtion, $|.|:K \longrightarrow\mathbb{R}$ satisfying,
		\begin{enumerate} [(i)]
			\item $|x|\geq 0$,  $\forall$ $x\in K$, and, $|x|=0$  $\Leftrightarrow$  $x=0$.
			\item $|xy|=|x||y|$,  $\forall$  $x,y\in K$.
			\item $|x+y|\leq |x|+|y|$,  $\forall$  $x,y\in K$  (\textit{triangle inequality}).
		\end{enumerate}
\end{dfn}

\begin{dfn}\label{def4}
	(Valuations) A \textit{valuation }of a field $K$ is a map, $v:K\longleftarrow \R\cup\infty$ satisfying,
	\begin{enumerate} [(i)]
		\item $v(x)=\infty$  $\Leftrightarrow$  $x=0$.
		\item $v(xy)=v(x)+v(y)$,  $\forall$  $x,y\in K$.
		\item $v(x+y)\geq min.\{v(x),v(y)\}$,  $\forall$  $x,y\in K$  
	\end{enumerate}
\end{dfn}
\begin{dfn}\label{def5}
	(Non-Archimedean Valuation) The valuation $v$ is defined to be \textit{non-Archimedean}, if it satisfies all the three conditions mentioned in the \textit{Definition\eqref{def4}}. Correspondingly, $|.|$ is defined to be the \textit{non-Archimedean} absolute value on $K$, if, $|n|$ is bounded for every $n\in \mathbb{N}$ on $K$. 
\end{dfn}

\begin{dfn}\label{def6}
	(Archimedean Valuation) The valuation $v$ is defined to be \textit{Archimedean}, if it does not satisfy the third condition, but satisfies the other two mentioned in the \textit{Definition\eqref{def4}}. Correspondingly, $|.|$ is defined to be the \textit{Archimedean} absolute value on $K$, if, $|n|$ is bounded for every $n\in \mathbb{N}$ on $K$.\\
	In other words, $v$ is \textit{non-Archimedean} if it is not \textit{Archimedean}. 
\end{dfn}
Clearly, from the above two definitions of \textit{valuations} and \textit{absolute value} on a field $K$, we can establish the following relation between them,
\begin{center}
	$v(x)=-log(|x|)$,  $\forall$  $x\in K$.
\end{center}
Where the $log$ can be considered with respect to some prime $p$ as the base, in case of \textit{p-adic valuations}.\\\\
\begin{eg}\label{eg1}
	Consider the \textit{p-adic valuation} and the \textit{p-adic absolute value} on $\mathbb{Q}$ and $\mathbb{Z}$, which is \textit{non-Archimedean}.
\end{eg}
	\begin{prop}\label{prop4}
		Every valuation on $\mathbb{Q}$ is either equivalent to the \textit{p-adic absolute value} $|.|_{p}$ or the \textit{usual absolute value} $|.|_{\infty}$.
	\end{prop}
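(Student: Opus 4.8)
The plan is to recognize this as Ostrowski's theorem and prove it by a dichotomy on the behaviour of the absolute value on the integers. First I would discard the trivial absolute value and assume $|\cdot|$ is nontrivial. The decisive split is according to whether the set $\{|n| : n \in \N\}$ is bounded: if it is bounded I will land on a $p$-adic absolute value (the non-Archimedean alternative), and if it is unbounded I will land on the usual absolute value (the Archimedean alternative). Throughout I use $|1| = 1$, which follows from multiplicativity since $|1| = |1|^2$ and $|1| \neq 0$.

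In the bounded case I would first upgrade boundedness to the ultrametric inequality $|x+y| \le \max(|x|,|y|)$. The standard device is to expand $|(x+y)^n|$ by the binomial theorem, bound it by $(n+1)\max(|x|,|y|)^n$ using $|\binom{n}{k}| \le 1$, take $n$-th roots, and let $n \to \infty$. Once the ultrametric inequality is in hand, I would consider $\mathfrak{p} = \{a \in \Z : |a| < 1\}$ and check, using multiplicativity and the ultrametric inequality, that $\mathfrak{p}$ is a proper nonzero prime ideal of $\Z$, hence $\mathfrak{p} = (p)$ for a unique prime $p$. Writing any integer as $n = p^{k}u$ with $p \nmid u$ then forces $|u| = 1$ and $|p| = c$ for some constant $c \in (0,1)$, so $|n| = c^{\,v_p(n)}$. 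Choosing $s > 0$ with $c = p^{-s}$ yields $|n| = |n|_p^{\,s}$ on $\Z$, and extension by multiplicativity to $\Q$ gives $|\cdot| \sim |\cdot|_p$.

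The unbounded case is the substantive one. For integers $m,n \ge 2$ I would expand $n$ in base $m$, writing $n = \sum_{i=0}^{N} a_i m^i$ with $0 \le a_i < m$ and $N \le \log n/\log m$, and apply the triangle inequality to obtain $|n| \le C(N+1)\max(1,|m|)^N$, where the constant $C$ depends only on $m$ (since each $|a_i| \le a_i \le m-1$). The crucial step is to apply this bound to $n^t$ in place of $n$, take $t$-th roots, and let $t \to \infty$: the prefactor, which grows only polynomially in $t$, tends to $1$, leaving the clean inequality $|n| \le \max(1,|m|)^{\log n/\log m}$. This immediately shows $|m| > 1$ for every $m \ge 2$, since otherwise every $|n| \le 1$, contradicting unboundedness. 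Feeding $|m| > 1$ back in and exploiting the symmetry between $m$ and $n$, I would deduce $|n|^{1/\log n} = |m|^{1/\log m}$, a constant $e^{s}$ independent of the chosen integer, whence $|n| = n^{s} = |n|_\infty^{\,s}$ and $|\cdot| \sim |\cdot|_\infty$.

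I expect the main obstacle to be the Archimedean estimate, and specifically the two limiting arguments. The subtlety is to make rigorous that the $m$-dependent constant $C$ together with the factor $(N+1)$ — both of which blow up as the auxiliary exponent $t$ increases — contribute nothing after taking $t$-th roots, and then to correctly combine the two symmetric inequalities so as to pin down a single exponent $s$ valid for all integers simultaneously. By contrast, the bounded case is essentially formal once the ultrametric inequality and the prime-ideal structure of $\mathfrak{p}$ are observed.
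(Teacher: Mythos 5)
The paper never actually proves this proposition --- it is quoted as a standard background fact (Ostrowski's theorem) with no argument supplied --- so there is no internal proof to compare yours against; judged on its own merits, your proposal is correct and is the classical Ostrowski argument. A few small points would make it airtight. In the bounded (non-Archimedean) case, the estimate $|\binom{n}{k}| \le 1$ presupposes that $|a| \le 1$ for \emph{every} integer $a$; record the one-line reason, namely that $|a| > 1$ would make $|a^k| = |a|^k$ unbounded, contradicting the case hypothesis. With that in place, the argument via $\mathfrak{p} = \{a \in \Z : |a| < 1\}$ goes through (note that $\mathfrak{p} \neq 0$ precisely because you discarded the trivial absolute value, and $\mathfrak{p} \neq \Z$ since $|1| = 1$). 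In the unbounded (Archimedean) case your handling of the limits is right: applying the base-$m$ bound to $n^t$, the prefactor $C(N_t+1)$ grows only linearly in $t$, so its $t$-th root tends to $1$, leaving $|n| \le \max(1,|m|)^{\log n/\log m}$, and the symmetrization pins down $\log |n| / \log n = \log |m| / \log m =: s > 0$ for all $m,n \ge 2$, i.e.\ $|n| = |n|_\infty^{\,s}$. To pass from $\N$ to $\Q$ in either case, use $|-1| = 1$ (from $|-1|^2 = |1| = 1$) together with multiplicativity. Finally, note that the proposition as stated in the paper silently omits the trivial absolute value, which is equivalent to neither $|\cdot|_p$ nor $|\cdot|_\infty$; your opening move of excluding it is the correct reading of the statement.
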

Having defined all the tools neccessary, we finally introduce the notion of \textit{Ad\`{e}eles} and \textit{Id\`{e}les} of a global field $K$.

\subsection{Ad\`{e}les and Id\`{e}les}
\begin{dfn}\label{def7}
	(Valuation Ring) Given a global field $K$, suppose we denote $K_{\nu}$ as the completion of $K$ at a \textit{non-Archimedean} place $\nu$. Hence we define the \textit{Valuation Ring} of $K_{\nu}$ as,
	\begin{center}
		$\mathcal{O}_{\nu}:=\{x_{\nu}\in K_{\nu}\mbox{ }|\mbox{ }|x_{\nu}|_{\nu}\leq 1\}$	
	\end{center}
$|.|_{\nu}$ denoting the \textit{non-Archimedean absolute value} at a place $\nu$.
\end{dfn}
\begin{rmk}\label{rmk2}
	$(K_{\nu},+)$ is a \textit{locally compact} additive group.
	\begin{eg}\label{eg2}
		If $K$ be an \textit{algebraic number field}, then $K_{\nu}$ shall either be $\mathbb{R}$,$\mathbb{C}$ or, a \textit{p-adic field}.
	\end{eg}
\end{rmk}
For every finite place $\nu$, each such $K_{\nu}$ admits of a local ring of integers $\mathcal{O}_{\nu}$ as defined above; which is open and \textit{compact }as a subgroup. Hence we have our definition:
\begin{dfn}\label{def8}
	(Ad\`{e}le Group)  The \textit{Ad\`{e}le }Group $\A_{K}$ of a global field $K$ is defined to be the restricted direct product of $K_{\nu}$ over all $\nu$ with respect to the subgroups $\mathcal{O}_{\nu}$( $\nu$ is finite ), i.e.,
	\begin{eqnarray}
	\A_{K}:={\prod\limits_{\nu}}'K_{\nu}
	\end{eqnarray}
\end{dfn}
Again, for every finite place $\nu$, we consider the \textit{locally compact} multiplicative groups $(K_{\nu}^{*},.)$, then each such $K_{\nu}^{*}$ admits of a local ring of units $\mathcal{O}_{\nu}^{\times}$( $\nu$ is finite ) which is open and \textit{compact} as a subgroup. Thus we have our other definition:
\begin{dfn}\label{def9}
	(Id\`{e}le Group)  The \textit{Id\`{e}le Group} $\I_{K}$ of a global field $K$ is the group of units of the Ad\`{e}le Group $\A_{K}$ and precisely is defined to be the restricted direct product of $K_{\nu}^{*}$ over all $\nu$ with respect to the subgroups $\mathcal{O}_{\nu}^{\times}$( $\nu$ is finite ), i.e.,
	\begin{eqnarray}
	\I_{K}:={\prod\limits_{\nu}}'K_{\nu}^{*}
	\end{eqnarray}
\end{dfn}
\begin{rmk}\label{rmk3}
	From \textit{Definition\eqref{def8}}, it is evident that, there exists a well-defined algebraic embedding,
	\begin{center}
		$K\longrightarrow \A_{K}$\\
			\vspace*{10pt}
			$x\mapsto(x,x,x,......)$
	\end{center}
\end{rmk}
\begin{rmk}\label{rmk4}
	From \textit{Definition\eqref{def9}}, it is evident that, there exists a well-defined algebraic embedding,
	\begin{center}
		$K^{*}\longrightarrow \I_{K}$\\
			\vspace*{10pt}
			$x\mapsto(x,x,x,......)$
		\end{center}
	\end{rmk}
\begin{rmk}\label{rmk5}
	The \textit{Ad\`{e}le Group} $\A_{K}$ admits of a ring structure $(\A_{K},+,.)$, and , a priori, by \textit{Definition\eqref{def9}}, we have, as mentioned,
		\begin{center}
			$\I_{K}\cong \A_{K}^{\times}$
		\end{center}
	Although this is not a topological embedding, since the toplology induced by $\A_{K}$ is \textit{coarser} than the topology defined as the restricted direct product on $\I_{K}$.
\end{rmk}
\begin{rmk}\label{rmk6}
	The field $K$ as a group is discrete and \textit{cocompact} subgroup of $\A_{K}$.
\end{rmk}
In terms of \textit{valuations}, we can also give alternative definitions for \textit{Ad\`{e}les} and \textit{Id\`{e}les}.
\begin{dfn}\label{def10}
	(Ad\`{e}le Group)  Given a global field $K$, suppose for each finite place $\nu$, $K_{\nu}$ denotes the \textit{completion} of $K$ at the place $\nu$, having $\mathcal{O}_{\nu}$ as the local valuation ring at the finite non-Archimedean place $\nu$ defined as in \textit{Definition\eqref{def7}}. Then we define the \textit{Ad\`{e}le Group} of $K$ as,
	\begin{eqnarray}
	\A_{K}:={\prod\limits_{\nu}}'K_{\nu}=\{x:=(x_{\nu})_{\nu}\mbox{ }|\mbox{ }x_{\nu}\in K_{\nu}\mbox{ }\forall\mbox{ }\nu\mbox{ and, }x_{\nu}\in\mathcal{O}_{\nu}\mbox{ for infinitely many }\nu\}\\
	\hspace*{60pt}=\{x:=(x_{\nu})_{\nu}\mbox{ }|\mbox{ }x_{\nu}\in K_{\nu}\mbox{ }\forall\mbox{ }\nu\mbox{ and, }|x_{\nu}|_{\nu}\leq1\mbox{ for infinitely many }\nu\}
	\end{eqnarray}
\end{dfn}
\begin{dfn}\label{def11}
	(Id\`{e}le Group)  Given a global field $K$, suppose for each finite place $\nu$, $K_{\nu}$ denotes the \textit{completion} of $K$ at the place $\nu$, and $K_{\nu}^{*}$ denotes the locally compact multiplicative group having $\mathcal{O}_{\nu}^{\times}$ as the ring of units of the the local valuation ring $\mathcal{O}_{\nu}$ at the finite non-Archimedean place $\nu$ defined as in \textit{Definition\eqref{def7}}. Then we define the \textit{Id\`{e}le Group} of $K$ as,
	\begin{eqnarray}
	\I_{K}:={\prod\limits_{\nu}}'K_{\nu}^{*}=\{x:=(x_{\nu})_{\nu}\mbox{ }|\mbox{ }x_{\nu}\in K_{\nu}^{*}\mbox{ }\forall\mbox{ }\nu\mbox{ and, }x_{\nu}\in\mathcal{O}_{\nu}^{\times}\mbox{ for infinitely many }\nu\}\\
	\hspace*{60pt}=\{x:=(x_{\nu})_{\nu}\mbox{ }|\mbox{ }x_{\nu}\in K_{\nu}^{*}\mbox{ }\forall\mbox{ }\nu\mbox{ and, }|x_{\nu}|_{\nu}=1\mbox{ for infinitely many }\nu\}
	\end{eqnarray}
\end{dfn}
\begin{dfn}
	(Id\`{e}le Class Group)  Given a global field $K$, and $\I_{K}$ to be its \textit{Id\`{e}le Group}, we define the \textit{id\`{e}le class group} of $K$ as,
	\begin{eqnarray}
	\mathcal{C}_{K}:=\I_{K}/K^{*}
	\end{eqnarray}
\end{dfn}
Next, we shall introduce the notion of \textit{absolute value} on an Ad\`{e}le $\A_{K}$ and on an Id\`{e}le $\I_{K}$ of $K$.
\begin{dfn}\label{def12}
	Given a \textit{local field} $k$, we define the \textit{normalized absolute value} $|.|_{k}$ on $k$ as:
	\begin{enumerate}  [\textbullet]
		\item $|.|_{k}=|.|_{\infty}$, i.e., the usual absolute value if $k=\mathbb{R}$.
		\item $|z|_{k}=z.\overline{z}$, $\forall$ $z\in k$  if $k=\mathbb{C}$.
		\item For a non-Archimedean local field $K$ with uniformizing papameter $\pi$, we have,
		\begin{center}
			$|\pi|_{k}=\frac{1}{q}$,  where, $q=|\mathcal{O}_{k}/ \pi. \mathcal{O}_{k}|$	
		\end{center}
\end{enumerate}
\end{dfn}
\begin{dfn}
	Given a global field $K$, we know that, for every finite place $\nu$, $K_{\nu}$ is a local field which is also the \textit{completion} of $K$ at the place $\nu$. hence we define the \textit{normalized absolute value}, $|.|_{\A_{K}}:\A_{K}\longrightarrow \R_{+}^{*}$ in terms of the normalized absolute values $|.|_{\nu}$ on the completions $K_{\nu}$ as:
	\begin{eqnarray}\label{5}
		|x|_{\A_{K}}:=\prod\limits_{\nu}|x_{\nu}|_{\nu}, \mbox{  }   \forall \mbox{  }x:=(x_{\nu})_{\nu}\in \A_{K}
	\end{eqnarray}
\end{dfn}
\begin{rmk}\label{rmk7}
	for every $x\in \I_{K}$, we have, $|x|_{\A_{K}}=1$.
\end{rmk}
\begin{rmk}\label{rmk8}
	Using the above definitions, we can conclude that, $\mathcal{C}_{K}$ is not coompact with respect this absolute value, although a priori, we have that, $\A_{K}/K$ is compact.
\end{rmk}
\begin{dfn}\label{def13}
	(Id\`{e}le Class Group of Norm $1$)  Suppose $K$ be an \textit{algebraic number field} or a \textit{finitely generated function field} in one variable over a finite field $\mathbb{F}_{q}$, where $q$ is some power of prime. Then we define the \textit{norm $1$ Id\`{e}le Group} of $K$ as,
	\begin{center}
		$\I_{K}^{1}:=ker(|.|_{\A_{K}})$, \hspace*{20pt} where, the map $|.|_{\A_{K}}$ is defined in \eqref{5}.
	\end{center}
And, consequently, we give our definition of the \textit{Id\`{e}le Class Group of norm $1$} as,
\begin{eqnarray}\label{6}
\mathcal{C}_{K}^{1}:=\I_{K}^{1}/K^{*}
\end{eqnarray}
\end{dfn}
\begin{rmk}\label{rmk9}
	$\mathcal{C}_{K}^{1}$ is well-defined by the fact that, $K^{*}\hookrightarrow \I_{K}^{1}$ (Using \textit{Artin's Product Formula})

\end{rmk}
\begin{rmk}\label{rmk10}
	For any globbal field $K$, $\mathcal{C}_{K}^{1}$ is \textit{compact}.
\end{rmk}
Now that we have got the idea of the \textit{Ad\`{e}le groups} $\Ak$ and \textit{Id\`{e}le Groups} $\Ik$ for a global field $K$, we intend to explicitly perform \textit{Fourier Transforms} on a particular space $\mathcal{S}(\Ak)$ of all the \textit{Ad\`{e}lic Schwartz-Bruhat Functions} $f$.

\subsection{Fourier Transforms on $\mathcal{S}(\Ak)$}

Given a global field $K$ and its completion $K_{\nu}$ at aplace $\nu$ and $\sO_{\nu}$ being the local ring of integers of $K_{\nu}$ at $\nu$ for every finite place $\nu$; we can define corresponding \textit{Schwartz-Bruhat Spaces} $\sS(K_{\nu})$ of functions for every $\nu$.\\
Using \textit{Definition\eqref{def8}} , we define the space $\sS(\Ak)$ of \textit{Ad\`{e}lic Schwartz-Bruhat Functions} as:
\begin{center}
	$\sS(\Ak):=\bigotimes'\sS(K_{\nu})$
\end{center}
Where $\bigotimes'$ denotes the restricted tensor product of the indivudual Schwartz-Bruhat Spaces $\sS(K_{\nu})$ such that, $\sS(\Ak)$ has the following structure:
\begin{eqnarray}
\sS(\Ak):=\{f\in \otimes f_{\nu}\mbox{ }|\mbox{ }f_{\nu}\in \sS(K_{\nu})\mbox{ }\forall \mbox{ }\nu \mbox{, }f_{\nu}|_{\sO_{\nu}}=1\mbox{ for almost all }\nu \}
\end{eqnarray}
Therefore, $f\in \sS(\Ak)$  $\Rightarrow$  $f(x)=\prod\limits_{\nu}f_{\nu}(x_{\nu})$,  $\forall$  $x:=(x_{\nu})_{\nu}\in \Ak$\\
And $f$ is termed as the \textit{Ad\`{e}lic Schwartz-Bruhat Function}.
\begin{rmk}\label{rmk11}
	$\sS(\Ak)$ is \textit{dense} in $L^{2}(\Ak)$, where $L^{2}(\Ak)$ is defined with respect to the \textit{Haar Measure} on $\Ak$.
\end{rmk}
\begin{dfn}\label{def14}
	(Fourier Transformation Formula)  Given any $f\in \sS(\Ak)$, fixing a non-trivial continuous unitary character $\psi$ on $\Ak$, we define the \textit{Ad\`{e}lic Fourier Transform} of $f$ as:
	\begin{eqnarray}
	\hat{f}(y):=\int\limits_{\Ak}f(x)\psi(xy)dx
	\end{eqnarray}
\end{dfn}
Where $dx$ denotes the \textit{Haar Measure} on $\Ak$ normalized by the \textit{self-dual measure} for $\psi$.
\begin{rmk}\label{rmk12}
	The map, $f\mapsto \hat{f}$ defines an automorphism on $\sS(\Ak)$ which extends to an isometry on $L^{2}(\Ak)$.
\end{rmk}

\section{Global Zeta Function $\Zeta$}
\begin{dfn}\label{def15}
Given any $\C^{*}$-valued character $\chi$ if the \textit{Id\`{e}le Class Group} $\I_{K}$ such that, $\chi|_{K^{*}}=1$; i.e., in other words, $\chi$ being a \textit{quasi-character} of exponenet greater than $1$; i.e., in other words, an \textit{Id\`{e}le Class Character}; and for every $f\in \sS(\Ak)$, we define the \textit{Global Zeta Function} of the field $K$ as:
	\begin{eqnarray}
	\Zeta:=\int\limits_{\Ik}f(x)\chi(x)\Dx
	\end{eqnarray}

Where, $\Dx$ denotes the Haar Measure on $\Ik$, induced by the product measure $\prod\limits_{\nu}\Dx_{\nu}$ on $\prod\limits_{\nu}K_{\nu}^{*}$, for each non-Archimedean place $\nu$.
\end{dfn}
\begin{rmk}\label{rmk13}
Note that, here, $\Dx$ denotes the Haar Measure on $K_{\nu}^{*}$ for every non-Archimedean place $\nu$, usually having the following representation:
\begin{center}
	$\Dx_{\nu}:=c_{\nu}\frac{dx_{\nu}}{|x_{\nu}|_{\nu}}$
\end{center}
Where, $c:=(c_{\nu})_{\nu}$ is some constant factor that is usually introduced in order to normalize $\Dx$.\\\\
$c:=(c_{\nu})_{\nu}$ can be evaluated as:
\begin{center}
$c_{\nu}=\frac{q_{\nu}}{q_{\nu}-1}$
\end{center}
\vspace{10pt}
Where, $q_{\nu}:=\mathcal{N}(\nu)=q^{deg(\nu)}$,  where,  $deg(\nu):=[\mathbb{F}_{q_{\nu}}:\mathbb{F}_{q}]$, at every finite place $\nu$.\\
Such that, the \textit{Haar Measure} on $\sO_{\nu}^{\times}$  ($\sO_{\nu}$ being the local ring of integers at every non-Archimedean place $\nu$) shall be$=\sqrt{(\mathcal{N}(\mathcal{D}_{\nu})^{-1}}=\sqrt{q^{-d_{\nu}}}$, $q$ be a prime.\\\\
	Where, $\mathcal{D}_{\nu}$ denotes the \textit{local different} at every non-Archimedean place $\nu$. In fact, $d_{\nu}=0$ for all but finitely many non-Archimedean places $\nu$.
	\end{rmk}
	In our main theorem explicitly stated and proved later, we shall establish the fact that, $\Zeta$ is \textit{normally convergent} for $\sigma=Re(s)>1$,  and defines a holomorphic function in the region of its convergence, using rigorously, the fact that, $\chi$ has a representation,  $\chi=\mu|.|^{s}$, where, $\mu$ is a \textit{unitary character} of the Id\`{e}le Group $\Ik$.
\section{Riemann-Roch Theorem}
In this section, we shall state and prove one of the most unique and important theorems in the field of \textit{Harmonic Analysis} on \textit{Ad\`{e}lic Groups} which shall use to prove our main theorem. First, let us mention the \textit{Poisson Summation Formula} in order to prove the \textit{Riemann-Roch Theorem} .
\begin{thm}\label{thm2}
	(Poisson Summation Formula)  Consider $f\in \sS(\Ak)$, i.e., $f$ satisfies the following conditions,
	\begin{enumerate}  [(i)]
		\item $f\in L^{1}(\Ak)$,  and, $f$ is continuous.
		\item  $\sum\limits_{\gamma\in K}f(z(y+\gamma))$ converges for all id\`{e}les $z\in \Ik$ and for every ad\`{e}le $y\in \Ak$,  uniformly for $y$.
		\item  $\sum\limits_{\gamma\in K} |\hat{f}(z\gamma)|$ is convergent for every id\`{e}le $z\in \Ik$.
	\end{enumerate}
Then,
\begin{center}
	$\tilde{f}=\tilde{\hat{f}}$  
	\end{center}
Where, $\hat{f}$ denotes the \textit{Ad\`{e}lic Fourier Transform }of $f$, and, $\tilde{f}(x):=\sum\limits_{\gamma\in K} f(\gamma+x)$,  $\forall$  $x\in \Ak$.
In other words, 
\begin{eqnarray}
\sum\limits_{\gamma\in K}f(\gamma+x)=\sum\limits_{\gamma\in K}\hat{f}(\gamma+x).
\end{eqnarray}
\end{thm}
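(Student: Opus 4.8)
The plan is to convert the asserted identity into a statement about Fourier analysis on the \emph{compact} quotient $\Ak/K$, exploiting the fact (Remark \ref{rmk6}) that $K$ embeds in $\Ak$ as a discrete, cocompact subgroup. First I would form the periodization $\tilde{f}(x):=\sum_{\gamma\in K}f(x+\gamma)$. Hypothesis (ii) (with $z=1$) guarantees that this series converges uniformly in $x$, so $\tilde{f}$ is a well-defined continuous function; since each summand depends on $x$ only modulo $K$, $\tilde{f}$ descends to a continuous function on the compact group $\Ak/K$. The identical argument applied to $\hat{f}$, using hypothesis (iii) for absolute convergence, produces the continuous periodization $\tilde{\hat{f}}$ on $\Ak/K$. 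Both objects in the statement are thus genuine continuous functions on a compact abelian group, which is the setting in which the Fourier-inversion machinery of Section 1.3 (Theorem \ref{thm1}) applies.

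The heart of the argument is to expand $\tilde{f}$ in its Fourier series on $\Ak/K$ and to identify the coefficients. The Pontryagin dual of $\Ak/K$ is the annihilator $K^{\perp}=\{\xi:\psi(\xi\gamma)=1\ \forall\,\gamma\in K\}$ sitting inside $\widehat{\Ak}\cong\Ak$. Because the fixed character $\psi$ is chosen trivial on the image of $K$, one has $K\subseteq K^{\perp}$; I would then invoke the self-duality $K^{\perp}=K$, so that the characters of $\Ak/K$ are exactly the maps $x\mapsto\psi(\gamma x)$ indexed by $\gamma\in K$. Granting the self-dual normalization of $dx$, for which $\Ak/K$ has volume $1$, the Fourier coefficient of $\tilde{f}$ attached to $\gamma$ is $\int_{\Ak/K}\tilde{f}(x)\overline{\psi(\gamma x)}\,dx$; unfolding the periodization sum against the integral over a fundamental domain, and using $\psi(\gamma\delta)=1$ for $\gamma,\delta\in K$, collapses this to $\int_{\Ak}f(x)\overline{\psi(\gamma x)}\,dx$, which is precisely a value of $\hat{f}$. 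Reassembling the (absolutely convergent, by (iii)) Fourier series then expresses $\tilde{f}$ as a superposition of these characters weighted by the values of $\hat{f}$, and evaluation yields the Poisson identity $\sum_{\gamma\in K}f(\gamma)=\sum_{\gamma\in K}\hat{f}(\gamma)$; the translated form recorded in the statement is obtained by rerunning the computation for the shifted function $y\mapsto f(x+y)$, whose transform differs from $\hat{f}$ only by a unitary factor.

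I expect the genuine obstacle to lie in the two structural facts underpinning the second step, rather than in the formal manipulations. First, proving $K^{\perp}=K$ — self-duality of $K$ under the pairing induced by $\psi$ — together with the covolume-$1$ normalization of the self-dual measure, is the conceptual crux: it is what forces the dual lattice to coincide with $K$, and it is ultimately where Artin's product formula (Remark \ref{rmk9}) and the global structure of $\Ak$ enter. Second, the interchanges of summation with integration, both in unfolding the Fourier coefficient and in reconstructing $\tilde{f}$ from its series, must be justified by absolute and uniform convergence; this is exactly the purpose of hypotheses (i)--(iii), with (i) and (ii) controlling $\tilde{f}$ and the unfolding and (iii) ensuring that the Fourier series returns $\tilde{f}$ pointwise rather than merely in $L^{2}(\Ak/K)$. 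Once these two points are secured, the remainder of the proof is bookkeeping.
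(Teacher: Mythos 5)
Your proposal follows essentially the same route as the paper's proof: periodize $f$ to the compact quotient $\Ak/K$, identify the Fourier coefficients of $\tilde{f}$ with values of the ad\`{e}lic transform $\hat{f}$ on $K$ by unfolding the sum against the quotient integral (the paper's Lemma \ref{lemma3}), then recover $\tilde{f}$ from its absolutely convergent Fourier series via inversion on $\Ak/K$ with dual group $K$ under the discrete topology (the paper's Lemma \ref{lemma4}), and evaluate to get $\sum_{\gamma\in K}f(\gamma)=\sum_{\gamma\in K}\hat{f}(\gamma)$. The only differences are cosmetic: you make the self-duality $K^{\perp}=K$ and the covolume-one normalization explicit as the structural inputs, where the paper invokes them implicitly, so your attempt matches the paper's argument step for step.
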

\begin{proof}
	We consider any function $\varphi$ on $\Ak/K$, induced by some $K$-invariant function $\varphi$ on $\Ak$. Therefore, by \textit{definition\eqref{def14}} of \textit{Fourier Transform}, we have,
	\begin{center}
		$\hat{\varphi}(z)=\int\limits_{\Ak/K}\varphi(t)\psi(tz)\overline{dt}$,  $\forall$  $z\in K$
	\end{center}
	Where, $\psi$ is a non-trivial continuous unitary character on $\Ak$; and, $\overline{dt}$ is the quotient measure on $\Ak/K$ induced by the measure $dt$ on $\Ak$; satisfying the relation:
	\begin{center}
	$	\int\limits_{\Ak/K}\hat{f}(t)\overline{dt}=\int\limits_{\Ak/K}\sum\limits_{\nu\in K}f(\nu+t)\overline{dt}=\int\limits_{\Ak}f(t)dt$
	\end{center}
	for every $f\in \sS(\Ak)$ such that, $f$ satisfies certain convergence properties.\\\\
	It is important to mention that, each of the integrals in the above  identity is well-defined, since, in the first two integrals, the integration variable $t$ assumes value from the quotient group $\Ak/K$.\\\\
	In order to prove the theorem, we shall need to apply the following two important lemmas:
	\begin{lemma}\label{lemma3}
		For every continuous function $f\in \sS(\Ak)$,
		\begin{center}
			$\hat{f}|_{K}=\hat{\tilde{f}}|_{K}$
		\end{center}
	\end{lemma}
\begin{proof}
	Using definition of \textit{Fourier Transform}, we obtain,
	\begin{center}
		$\hat{\tilde{f}}(z)=\int\limits_{\Ak/K}\tilde{f}(t)\psi(tz)\overline{dt}=\int\limits_{\Ak/K}(\sum\limits_{\gamma\in K}f(\gamma+t))\psi(tz)\overline{dt}$,  (Expanding $\tilde{f}$)  $\forall$  $z\in K$
	\end{center}
\begin{center}
	$=\int\limits_{\Ak/K}(\sum\limits_{\gamma\in K}f(\gamma+t)\psi((\gamma+t)z))\overline{dt}$
\end{center}
	[ Since $\psi$ being assumed to be unitary on $\Ak$, hence, $\psi|_{K}=1$ $\Rightarrow$ $\psi(tz)=\psi((\gamma+t)z)$,  $\forall$  $\gamma\in K$,  hence the equality holds by the definition of quotient measure on $\Ak$ relative to the counting measure on $K$.]
\begin{center}
	$=\int\limits_{\Ak}f(t)\psi(tz)dt$\\
	\vspace*{10pt}
	$=\hat{f}(z)$.  \hspace*{20pt}  $\forall$  $z\in K$
\end{center}
\end{proof}
\begin{lemma}\label{lemma4}
	For any $f\in \sS(\Ak)$ and for every $z\in K$,
	\begin{center}
		$\tilde{f}(z)=\sum\limits_{\gamma\in K}\hat{\tilde{f}}(\gamma)\overline{\psi}(\gamma z)$
	\end{center}
Where, $\overline{\psi}$ denotes the complex conjugate of $\psi$.
\end{lemma}
\begin{proof}
	We have, a priori by \textit{Lemma\eqref{lemma3}},
		\begin{center}
		$\hat{f}|_{K}=\hat{\tilde{f}}|_{K}$,  for $f\in \sS(\Ak)$
	\end{center}
Therefore, by the fact that, the sum, $\sum\limits_{\gamma\in K}\hat{f}(\gamma)\overline{\psi}(\gamma z)$ is normally convergent, we can assert that, the sum, $\sum\limits_{\gamma\in K}\hat{\tilde{f}}(\gamma)\overline{\psi}(\gamma z)$ is also normally convergent, i.e., precisely,
\begin{center}
	$\sum\limits_{\gamma\in K}|\hat{\tilde{f}}(\gamma)|<\infty$   \hspace*{20pt}[Since, $\psi$ is unitary, hence, $\overline{\psi}$ is also unitary.]
\end{center}
Since we have the counting measure on $K$, also also due to the fact that, the \textit{Pontryagin Dual} of $\Ak/K$ is $K$ itself under the discrete topology, hence using \textit{Fourier Inversion Formula}, we obtain,
\begin{center}
	$\tilde{f}(z)=\sum\limits_{\gamma\in K}\hat{\tilde{f}}(\gamma)\overline{\psi}(\gamma z)$,  $\forall$  $z\in K$.
\end{center}
And our claim is established.
\end{proof}

	Putting $z=0$ in \textit{Lemma\eqref{lemma4}}, we have,
	\begin{center}
		$\tilde{f}(0)=\sum\limits_{\gamma\in K}\hat{\tilde{f}}(\gamma)=\sum\limits_{\gamma\in K}\hat{f}(\gamma)$
	\end{center}
Although, 
\begin{center}
	$\tilde{f}(0)=\sum\limits_{\gamma\in K}f(\gamma)$  \hspace*{20pt}[Using definition of $\tilde{f}$]
\end{center}
And, hence, \begin{center}
	$\sum\limits_{\gamma\in K}f(\gamma)=\sum\limits_{\gamma\in K}\hat{f}(\gamma)$  \\
	\vspace*{10pt}
	$\Rightarrow\tilde{f}=\tilde{\hat{f}}$.  \hspace*{20pt}[Since, $f$ is $K$-invariant on $\Ak$]
\end{center}
And the \textit{Poisson Summation formula} is established.

\end{proof}
\begin{rmk}\label{rmk14}
	The sum, $\sum\limits_{\gamma\in K}f(\gamma x)$  for every $x\in \Ak$ is defined as the \textit{Average} for an Id\`{e}le $x$ in $\Ak$.
\end{rmk}
Next, we shall introduce the statement of the \textit{Riemann-Roch Theorem} which goes as follows:
\begin{thm}
	(Riemann-Roch Theorem)  Suppose $f\in \sS(\Ak)$, i.e., $f$ satisfies the following conditions:
	\begin{enumerate}  [(i)]
		\item $f\in L^{1}(\Ak)$,  and, $f$ is continuous.
		\item  $\sum\limits_{\gamma\in K}f(x+\gamma)$ converges for all ad\`{e}les $x\in \Ak$,  uniformly .
		\item  $\sum\limits_{\gamma\in K} |\hat{f}(\gamma)|$ is convergent .
	\end{enumerate}
Then,
\begin{eqnarray}
\sum\limits_{\gamma\in K}f(\gamma x)=\frac{1}{|x|}\sum\limits_{\gamma\in K}\hat{f}(\gamma x^{-1})
\end{eqnarray}
\end{thm}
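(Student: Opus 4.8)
The plan is to deduce the statement from the \textit{Poisson Summation Formula} (\textit{Theorem\eqref{thm2}}) by applying it not to $f$ itself but to a dilated copy of $f$. Fix an id\`ele $x\in\Ik$ and define $g:\Ak\to\C$ by $g(y):=f(xy)$. The whole argument rests on two observations: first, that the Fourier transform of $g$ is a rescaled dilate of $\hat f$, the scaling factor being exactly $|x|^{-1}$; and second, that evaluating the Poisson identity for $g$ at the origin produces precisely the two sides of the Riemann--Roch formula.

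First I would compute $\hat g$. By \textit{Definition\eqref{def14}} we have $\hat g(y)=\int_{\Ak}f(xt)\psi(ty)\,dt$. Performing the change of variables $u=xt$ and using that the modulus of the automorphism $t\mapsto xt$ of $\Ak$ with respect to the Haar measure $dt$ is exactly $|x|=|x|_{\Ak}$ (this is the content of the normalized absolute value defined in \eqref{5}), one gets $dt=|x|^{-1}\,du$ and $ty=(x^{-1}y)u$, so that
\begin{center}
$\hat g(y)=\dfrac{1}{|x|}\int\limits_{\Ak}f(u)\,\psi\bigl(u\,(x^{-1}y)\bigr)\,du=\dfrac{1}{|x|}\,\hat f(x^{-1}y),$
\end{center}
where I have used commutativity of multiplication in $\Ak$ to rewrite $x^{-1}ty$ as $u\,(x^{-1}y)$.

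Next I would verify that $g$ inherits the three hypotheses of \textit{Theorem\eqref{thm2}}. Continuity and $L^{1}$-integrability of $g$ follow from those of $f$, since $t\mapsto xt$ is a measure-scaling homeomorphism; the uniform convergence of $\sum_{\gamma\in K}g(y+\gamma)=\sum_{\gamma\in K}f\bigl(x(y+\gamma)\bigr)$ is exactly hypothesis (ii) of the Poisson formula with $z=x$; and the absolute convergence of $\sum_{\gamma\in K}|\hat g(\gamma)|=|x|^{-1}\sum_{\gamma\in K}|\hat f(x^{-1}\gamma)|$ is hypothesis (iii) with $z=x^{-1}$. With these verified, the Poisson Summation Formula applied to $g$ and evaluated at the origin $0\in\Ak$ gives $\sum_{\gamma\in K}g(\gamma)=\sum_{\gamma\in K}\hat g(\gamma)$. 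Substituting the two computations above, the left side is $\sum_{\gamma\in K}f(\gamma x)$ and the right side is $|x|^{-1}\sum_{\gamma\in K}\hat f(\gamma x^{-1})$ (again using commutativity of $\Ik$ to write $x^{-1}\gamma=\gamma x^{-1}$), which is precisely the asserted identity.

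The main obstacle is the justification of the change of variables producing the factor $|x|^{-1}$: one must know that $|x|_{\Ak}$ is the module of the dilation automorphism of $\Ak$, i.e.\ that scaling $dt$ by $x$ multiplies the measure by $\prod_\nu|x_\nu|_\nu$. This reduces to the corresponding statement at each local place, together with the fact that, by \textit{Definition\eqref{def11}}, $|x_\nu|_\nu=1$ for almost all $\nu$, so the product is finite and the local moduli multiply to $|x|_{\Ak}$. The remaining convergence bookkeeping for the two series is routine, given the flexibility already built into the hypotheses of \textit{Theorem\eqref{thm2}}.
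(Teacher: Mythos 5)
Your proposal is correct and follows essentially the same route as the paper's own proof: both fix $x\in\Ik$, apply the \textit{Poisson Summation Formula} to the dilated function $y\mapsto f(xy)$, and compute its Fourier transform by the change of variables $u=xt$ to extract the factor $\frac{1}{|x|}$. If anything, your write-up is slightly more careful than the paper's, since you explicitly verify that the dilated function satisfies the hypotheses of \textit{Theorem\eqref{thm2}} and justify why the Jacobian of the dilation is $|x|_{\A_{K}}$.
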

\begin{proof}
	Fix, $x\in \Ik$. Now, we define $h\in \sS(\Ak)$ by,
	\begin{center}
		$h(y):=f(yx)$,  $\forall$  $y\in \Ak$.
	\end{center}
Then, 
\begin{eqnarray}\label{7}
	\sum\limits_{\gamma\in K}h(\gamma)=\sum\limits_{\gamma\in K} \hat{h}(\gamma) \hspace{20pt}[Applying \textit{Poisson Summation Formula}]
\end{eqnarray}
Although, by definition of \textit{Fourier Transform},
\begin{center}
	$\hat{h}(\gamma)=\int\limits_{\Ak}f(tx)\psi(t\gamma )dt$
\end{center}
\begin{center}
	$=\frac{1}{|x|}\int\limits_{\Ak}f(w)\psi(\gamma w x^{-1})$,   \hspace*{20pt}[Substituting, $w=tx$ in the integral]
\end{center}
\begin{center}
	$=\frac{1}{|x|}\hat{f}(\gamma x^{-1})$\hspace*{20pt} [By \textit{Definition\eqref{def14}}]
\end{center}
Applying \textit{Poisson Summation Formula( Theorem\eqref{thm2}}, and from \eqref{7}, we get,
	\begin{center}
	$\sum\limits_{\gamma\in K}f(\gamma x)=\frac{1}{|x|}\sum\limits_{\gamma\in K}\hat{f}(\gamma x^{-1})$
	\end{center}
And our theorem is established.
\end{proof}
\section{The Main Theorem}

\begin{thm}\label{thm3}
	The following holds true for the \textit{Global Zeta Function} $\Zeta$ of a number field $K$:
	\begin{enumerate}
		\item $\Zeta$ has a \textit{meromorphic extension }on $\C$.
		\item The \textit{extended global zeta function} $\Zeta$ is \textit{holomorphic }everywhere except when, $\mu=|.|^{-iy}$, $y\in \R$; hence having \textit{simple poles} at the points $s=iy$ and $s=1+iy$ with \textit{residues} given by,
		\begin{center}
			$-\kappa f(0)$,  and,  $\kappa \hat{f}(0)$  
		\end{center}
		respectively. Here, we can deduce that, $\kappa:=Vol.(\sC_{K}^{1})=$Volume of the \textit{Id\`{e}le Class Group of $K$ of norm $1$}.
		\item $\Zeta$ satisfies the functional equation,
		\begin{eqnarray}
		\Zeta=\Zetaa
		\end{eqnarray}
		Where $\hat{f}$ denotes the \textit{Fourier Transform} of a function $f\in \sS(\Ak)$ and, $\check{\chi}:=\chi^{-1}|.|$ is termed as the \textit{Shifted Dual} of the character $\chi$.
	\end{enumerate}
\end{thm}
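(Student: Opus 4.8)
The plan is to follow Tate's classical strategy: split the idèle group integral into a compact region and its complement, use the Riemann--Roch theorem (the adèlic Poisson summation) to relate the two halves, and extract the meromorphic continuation together with the functional equation from a symmetric expression.

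The plan is to follow Tate's classical method: realize $\Zeta$ as an integral over the idèle class group fibered over $\R_{+}^{*}$ by the norm map, split this integral at norm one, and use the Riemann--Roch theorem to fold the low-norm part onto the high-norm part. First I would establish normal convergence for $\sigma = \Re(s) > 1$ by writing $\chi = \mu|\cdot|^{s}$ with $\mu$ unitary and estimating $|f(x)\chi(x)| = |f(x)|\,|x|^{\sigma}$; the rapid decay of the Schwartz--Bruhat function $f$ together with Proposition~\ref{prop3} guarantees absolute convergence and hence holomorphy of $\Zeta$ in this half-plane.

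Next, since $\chi$ is trivial on $K^{*}$, the integrand descends to the quotient and I would rewrite
\begin{eqnarray}
\Zeta = \int_{\Ik/K^{*}} \Big(\sum_{\alpha \in K^{*}} f(\alpha x)\Big)\chi(x)\,\Dx.
\end{eqnarray}
Using the norm $|\cdot| : \Ik \to \R_{+}^{*}$, whose compact kernel quotient $\sC_{K}^{1}$ is compact by Remark~\ref{rmk10}, I would choose a splitting $\Ik/K^{*} \cong \sC_{K}^{1} \times \R_{+}^{*}$ and write $x = x_{1}\alpha_{t}$ with $|x_{1}| = 1$ and $|\alpha_{t}| = t$, so that $\chi(x) = \mu(x_{1})\,t^{s}$. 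Introducing the punctured theta sum $\Theta_{f}(x) := \sum_{\alpha \in K^{*}} f(\alpha x) = \tilde{f}(x) - f(0)$, the integral splits as $\int_{t \geq 1} + \int_{0 < t \leq 1}$. The first piece converges for every $s$ and defines an entire function. For the second, I would invoke the Riemann--Roch theorem in the form $\tilde{f}(x) = |x|^{-1}\,\tilde{\hat{f}}(x^{-1})$, which yields
\begin{eqnarray}
\Theta_{f}(x) = \frac{1}{|x|}\,\Theta_{\hat{f}}(x^{-1}) + \frac{\hat{f}(0)}{|x|} - f(0).
\end{eqnarray}

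Substituting this identity into the low-norm integral and changing variables $t \mapsto t^{-1}$ sends $(0,1]$ to $[1,\infty)$ and converts the $\Theta_{\hat{f}}$ term into an entire integral of exactly the shape obtained for $\hat{f}$ and $\check{\chi} = \chi^{-1}|\cdot|$; this symmetry is precisely what will produce the functional equation $\Zeta = \Zetaa$. The two leftover constant terms $-f(0)$ and $|x|^{-1}\hat{f}(0)$ integrate against $\mu(x_{1})$ over the compact group $\sC_{K}^{1}$. By orthogonality of characters, $\int_{\sC_{K}^{1}}\mu(x_{1})\,dx_{1}$ vanishes unless $\mu|_{\sC_{K}^{1}} = 1$, i.e. unless $\mu = |\cdot|^{-iy}$ for some $y \in \R$, in which case it equals $\kappa = \mathrm{Vol}(\sC_{K}^{1})$. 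The elementary integrals $\int_{0}^{1} t^{s-iy}\,\tfrac{dt}{t}$ and $\int_{0}^{1} t^{s-iy-1}\,\tfrac{dt}{t}$ then contribute boundary terms proportional to $-\kappa f(0)/(s-iy)$ and $\kappa\hat{f}(0)/(s-1-iy)$, furnishing simultaneously the meromorphic continuation to all of $\C$ and the simple poles at $s = iy$ and $s = 1+iy$ with the stated residues $-\kappa f(0)$ and $\kappa\hat{f}(0)$.

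The hard part will be the careful bookkeeping that isolates the polar $\gamma = 0$ contributions: the Riemann--Roch theorem is stated for the full sum $\tilde{f}$ over all of $K$, whereas the zeta integral naturally involves the punctured sum over $K^{*}$, and it is exactly the discrepancy $f(0)$ and $\hat{f}(0)$ between these that manufactures the poles. I would need to justify rigorously every interchange of summation and integration (using the uniform-convergence hypotheses built into $\sS(\Ak)$ and Proposition~\ref{prop3}), verify that the splitting $\Ik \cong \I_{K}^{1} \times \R_{+}^{*}$ is compatible with the Haar measures so that no spurious constants intrude, and confirm that the two entire pieces assemble into a single closed form that is manifestly symmetric under $(f,\chi) \mapsto (\hat{f}, \check{\chi})$. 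Once this symmetric expression is in hand, all three assertions — meromorphic continuation, the location and residues of the poles, and the functional equation — follow at once.
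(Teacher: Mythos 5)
Your proposal is correct and follows essentially the same route as the paper: both are Tate's argument, fibering the id\`{e}le integral over $\R_{+}^{*}$ by the norm, splitting at norm one, folding the low-norm part via the Riemann--Roch theorem, killing the boundary terms by orthogonality on $\sC_{K}^{1}$ unless $\mu=|.|^{-iy}$, and reading off the poles, residues, and the symmetry $(f,\chi)\mapsto(\hat{f},\check{\chi})$ from the resulting expression. Your write-up of the splitting $\Ik/K^{*}\cong \sC_{K}^{1}\times\R_{+}^{*}$ and the punctured theta sum $\Theta_{f}=\tilde{f}-f(0)$ is the cleaner standard formulation of what the paper does with its $\zeta_{t}(f,\chi)$ and error term $\mathcal{E}$, but it is the same proof.
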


\begin{proof}
	\begin{enumerate}
		\item For a number field $K$, a priori, we may write,
	\begin{eqnarray}\label{8}
	\Zeta=\int\limits_{0}^{\infty}\zeta_{t}(f,\chi)\frac{1}{t}dt
	\end{eqnarray}
	for every \textit{quasi-character} $\chi$ of exponent greater than $1$. Where, we define,
	\begin{eqnarray}\label{9}
	\zeta_{t}(f,\chi):=\int\limits_{\Ik}f(tx)\chi(tx)\Dx
		\end{eqnarray}
		Important to mention that, for any $x:=(x_{\nu})_{\nu}\in \Ak$, $t\in \R$,  we define the element,
		\begin{center}
			$tx:=(x_{\nu}')_{\nu};$  where, $x_{\nu}':=
			\left\{
			\begin{array}{cc}
			x_{\nu}$ $, &\mbox{ if }\nu\neq \nu',\\
			tx_{\nu}$ $, &\mbox{ if }\nu= \nu'.
			\end{array}
			\right. $	
		\end{center}

	For some specific non-Archimedean place $\nu'$.\\\\
Our aim is to first establish the \textit{Functional Equation} for $\zeta_{t}(f,\chi)$ using the \textit{Riemann-Roch Theorem} proved earlier, which leads us to prove the following proposition:
\begin{prop}\label{prop5}
	The function $\zeta_{t}(f,\chi)$ satisfies the functional equation,
	\begin{eqnarray}
		\zeta_{t}(f,\chi)=\zeta_{t^{-1}}(\hat{f},\check{\chi})+{\hat{f}(0)} \int\limits_{\Ck} \check{\chi}(\frac{x}{t})\Dx - f(0) \int\limits_{\Ck}\chi(tx)\Dx
			\end{eqnarray}
		\end{prop}
		\begin{proof}
			Using the \textit{Definition\eqref{def13}} of $\Ck$,
			\begin{center}
				$\zeta_{t}(f,\chi)=\int\limits_{\Ik^{1}}f(tx)\chi(tx)\Dx=\int\limits_{\Ck}(\sum\limits_{a\in K^{*}}f(atx))\chi(tx)\Dx=\int\limits_{\Ck}\chi(tx)\Dx(\sum\limits_{a\in K^{*}}f(atx))$  \hspace{100pt}[Since $\chi|_{K^{*}}=1$, by the hypothesis ]
			\end{center}
			therefore, we get, 
			\begin{center}
				$\zeta_{t}(f,\chi)+f(0)\int\limits_{\Ck}\chi(tx)\Dx=\int\limits_{\Ck}\chi(tx)\Dx(\sum\limits_{a\in K}f(atx))$
			\end{center}
			And, now, we apply the \textit{Riemann-Roch Theorem}, mentioned a priori, on the summand above so that, the Right Hand Side above yields the expression,
			\begin{center}
				$\int\limits_{\Ck}\chi(tx)\Dx(\sum\limits_{a\in K}f(atx))=\int\limits_{\Ck}\chi(tx)\Dx(\frac{1}{|tx|}\sum\limits_{a\in K}f(atx))$
				
			\end{center}
		\begin{center}
			$=\int\limits_{\Ck}\frac{\chi(tx)}{|tx|}\Dx(\sum\limits_{a\in K}\hat{f}(at^{-1}x^{-1}))$
		\end{center}
	\begin{center}
		$=\int\limits_{\Ck} |t^{-1} x| \chi(t x^{-1})\Dx (\sum\limits_{a\in K}\hat{f}(a t^{-1}x))$ \hspace*{20pt} [Substituting $x$ by $x^{-1}$]
	\end{center}
\begin{center}
	$=\zeta_{t^{-1}}(\hat{f},\check{\chi})+\hat{f}(0)\int\limits_{\Ck}\check{\chi}(\frac{x}{t})\Dx$\hspace*{20pt}[A priori, from \eqref{9}, substituting $t,f,\chi$ by, $t^{-1},\hat{f},\check{\chi}$ respectively.]
\end{center}
\begin{center}
	$	\zeta_{t}(f,\chi)=\zeta_{t^{-1}}(\hat{f},\check{\chi})+{\hat{f}(0)} \int\limits_{\Ck} \check{\chi}(\frac{x}{t})\Dx - f(0) \int\limits_{\Ck}\chi(tx)\Dx$
\end{center}
And the result is established.
\end{proof}
Using the above proposition, we shall prove our main theorem.
\begin{proof}
	Using \eqref{8}, as obtained from the definition of $\Zeta$, and applying properties of integration, we obtain that,
	\begin{center}\label{10}
	$	\Zeta=\int\limits_{0}^{1}\zeta_{t}(f,\chi)\frac{1}{t}dt+\int\limits_{1}^{\infty}\zeta_{t}(f,\chi)\frac{1}{t}dt$
	\end{center}
\begin{eqnarray}
	=:I_{1}+I_{2}\hspace{20pt}\mbox{(say)}
\end{eqnarray}
Where, 
\begin{eqnarray}\label{11}
I_{1}:=\int\limits_{0}^{1}\zeta_{t}(f,\chi)\frac{1}{t}dt
\end{eqnarray}
And,
\begin{eqnarray}\label{12}
I_{2}:=\int\limits_{1}^{\infty}\zeta_{t}(f,\chi)\frac{1}{t}dt
\end{eqnarray}
Now, 
\begin{center}\label{13}
$I_{2}:=\int\limits_{1}^{\infty}\zeta_{t}(f,\chi)\frac{1}{t}dt=\int\limits_{\{x\in\Ik\mbox{ }|\mbox{ }|x|\geq 1\}}f(x)\chi(x)\Dx$\hspace*{20pt}[Using Definition of $\zeta_{t}(f,\chi)$]
\end{center}
Which is \textit{normally convergent} for all $s\in \C$ .[Since, the integral above on the R.H.S. is convergent for $\sigma=Re(s)>1$]\\\\
Therefore, the integral $I_{2}$ is convergent for all $s\in \C$.\\\\
Using the \textit{functional equation} deduced in the \textit{Proposition\eqref{prop5}}, for $\zeta_{t}(f,\chi)$, we obtain,
\begin{eqnarray}\label{14}
	I_{1}:=\int\limits_{0}^{1}\zeta_{t}(f,\chi)\frac{1}{t}dt=\int\limits_{0}^{1}\zeta_{t^{-1}}(\hat{f},\check{\chi})\frac{1}{t}dt+\mathcal{E}
\end{eqnarray}
Where the error term $\mathcal{E}$ is defined as,
\begin{eqnarray}\label{15}
\mathcal{E}:=\int\limits_{0}^{1}\{\hat{f}(0) \int\limits_{\Ck} \check{\chi}(\frac{x}{t})\Dx - f(0) \int\limits_{\Ck}\chi(tx)\Dx\}\frac{1}{t}dt
\\
=\int\limits_{0}^{1}\{\hat{f}(0)\check{\chi}(t^{-1}) \int\limits_{\Ck} \check{\chi}(x)\Dx - f(0) \chi(t)\int\limits_{\Ck}\chi(x)\Dx\}\frac{1}{t}dt
\end{eqnarray}
Usng the fact that ,
\begin{eqnarray}\label{16}
\int\limits_{0}^{1}\zeta_{t^{-1}}(\hat{f},\check{\chi})\frac{1}{t}dt=\int\limits_{1}^{\infty}\zeta_{t}(\hat{f},\check{\chi})\frac{1}{t}dt\hspace{20pt}\mbox{ [Sustituting $t^{-1}$ for $t$] }
\end{eqnarray}
Using the result mentioned in \eqref{13}, we can say that, the integral, $\int\limits_{1}^{\infty}\zeta_{t}(\hat{f},\check{\chi})\frac{1}{t}dt$ is \textit{normally convergent} for all $s\in \C$, we assert that, the integral, $\int\limits_{0}^{1}\zeta_{t^{-1}}(\hat{f},\check{\chi})\frac{1}{t}dt$ also converges normally for every $s\in \C$.\\\\
Hence, we need to verify the convergence of only the error term $\mathcal{E}$ in order to conclude that, the \textit{Global Zeta Function} $\Zeta$ is normally convergent for every $s\in \C$.\\\\
Now, by definition of $\chi$ and $\check{\chi}$, they are orthogonal, by orthogonality relations, we assert that, 
\begin{center}
	$\int\limits_{\Ck}\chi(x)\Dx=0$,  and,  $\int\limits_{\Ck}\check{\chi}(x)\Dx=0$\hspace*{20pt}[Since, $\chi$ is non-trivial on $\Ik^{1}$]
\end{center}
Therefore, from \eqref{15}, we obtain, $\mathcal{E}=0$ for $\chi$ non-trivial on $\Ik^{1}$.\\\\
If, $\chi$ is trivial on $\Ik^{1}$, a priori, we have the representation,
\begin{center}
	$\chi=\mu|.|^{s}$
\end{center}
Then, we can write,
\begin{center}
	$\chi=|.|^{s'}$, where, $s'=s-i\tau$,  for some $\tau\in \R$\hspace*{20pt}[Since $\mu$ is unitary]
\end{center}
hence, evaluating $\mathcal{E}$ using above expression for $\chi$, we get,
\begin{center}
	$\mathcal{E}=\int\limits_{0}^{1}\{\hat{f}(0)t^{s'-1}Vol(\Ck)-f(0)t^{s'}Vol(\Ck) \}\frac{1}{t}dt$
\end{center}
\begin{center}\label{17}
	$=Vol(\Ck)\{\frac{\hat{f}(0)}{s'-1}-\frac{f(0)}{s'}\}$\hspace*{10pt}[Since, $\chi|_{\Ik^{1}}=1\Rightarrow \check{\chi}|_{\Ik^{1}}=1\Rightarrow\check{\chi}|_{\Ck}=1$]
\end{center}
Therefore, using \eqref{10} and \eqref{14}, we conclude that, $\Zeta$ is \textit{normally convergent} for every $s$, and since, $\mathcal{E}$ is a \textit{meromorphic function}, thus we obtain our desired \textit{meromorphic extension} of $\Zeta$ over $\C$
\item From above, we have, when $\chi$ is non-trivial on $\Ik^{1}$, then, $\mu\neq |.|^{-i\tau}$, $\tau\in \R$, then, $\mathcal{E}=0$. Hence, $\Zeta$ is \textit{Holomorphic} everywhere.\\\\
When, $\chi$ is trivial on $\Ik^{1}$, then, $\mu=|.|^{-i\tau}$, $\tau\in \R$. Hence,
\begin{center}
$\mathcal{E}=Vol(\Ck)\{\frac{\hat{f}(0)}{s'-1}-\frac{f(0)}{s'}\}$\hspace*{20pt}[ From \eqref{17}]
\end{center}
Hence $\Zeta$ is \textit{holomorphic} everywhere, except at the points, $s=i\tau$ and, $s=1+i\tau$, for $\tau\in \R$.\\\\
The respective residues can be evaluated as:
\begin{center}
	$- Vol(\Ck)f(0)$,  and,  $Vol(\Ck) \hat{f}(0)$
\end{center}
\begin{center}
	i.e.,  $	-\kappa f(0)$,  and,  $\kappa \hat{f}(0)$  
\end{center}
respectively, where, we have, $\kappa:=Vol.(\sC_{K}^{1})$.
\item Using the identities in  \eqref{10} and \eqref{14}, we get, for the \textit{Global Zeta Functions} $\Zeta$,
\begin{center}
	$	\Zeta=\int\limits_{0}^{1}\zeta_{t}(f,\chi)\frac{1}{t}dt+\int\limits_{0}^{1}\zeta_{t^{-1}}(\hat{f},\check{\chi})\frac{1}{t}dt+\mathcal{E}(f,\chi)$
\end{center}
\begin{eqnarray}\label{18}
	=\int\limits_{0}^{1}(\int\limits_{\Ik}f(tx)\chi(tx)\Dx)\frac{1}{t}dt+\int\limits_{0}^{1}(\int\limits_{\Ik}\hat{f}(tx)\check{\chi}(tx)\Dx)\frac{1}{t}dt+\mathcal{E}(f,\chi)
\end{eqnarray}
Moreover, applying propperties of \textit{Fourier Transform} for $f\in \sS(\Ak)$, we get,
\begin{eqnarray}\label{19}
	\hat{\hat{f}}=f(-x)\mbox{,  and,  }\check{\check{\chi}}=\chi
\end{eqnarray}
 Substituting $\hat{f}$ and $\check{\chi}$ instead of $f$ and $\chi$ in \eqref{18}, we get,
 \begin{center}
 	$	\Zetaa=\int\limits_{1}^{\infty}\zeta_{t}(\hat{f},\check{\chi})\frac{1}{t}dt+\int\limits_{1}^{\infty}\zeta_{t^{-1}}(\hat{\hat{f}},\chi)\frac{1}{t}dt+\mathcal{E}(\hat{f},\check{\chi})$
 \end{center}
\begin{eqnarray}\label{20}
=\int\limits_{1}^{\infty}(\int\limits_{\Ik}\hat{f}(tx)\check{\chi}(tx)\Dx)\frac{1}{t}dt+\int\limits_{1}^{\infty}(\int\limits_{\Ik}f(-tx)\chi(tx)\Dx)\frac{1}{t}dt+\mathcal{E}(\hat{f},\check{\chi})
\end{eqnarray}

But, from \eqref{15}, we have,
\begin{center}
	$\mathcal{E}(f,\chi)=\int\limits_{0}^{1}\{\hat{f}(0)\check{\chi}(t^{-1}) \int\limits_{\Ck} \check{\chi}(x)\Dx - f(0) \chi(t)\int\limits_{\Ck}\chi(x)\Dx\}\frac{1}{t}dt$
\end{center}
Hence,
\begin{center}
	$\mathcal{E}(\hat{f},\check{\chi})=\int\limits_{0}^{1}\{\hat{\hat{f}}(0)\check{\check{\chi}}(t^{-1}) \int\limits_{\Ck} \check{\check{\chi}}(x)\Dx - \hat{f}(0) \check{\chi(t)}\int\limits_{\Ck}\check{\chi(x)}\Dx\}\frac{1}{t}dt$
\end{center}
\begin{center}
	$=\int\limits_{0}^{1}\{\hat{f}(0)\check{\chi}(t^{-1}) \int\limits_{\Ck} \check{\chi}(x)\Dx - f(0) \chi(t)\int\limits_{\Ck}\chi(x)\Dx\}\frac{1}{t}dt$
\end{center}
[Using properties \eqref{19}]
  \begin{center}
  	$=\mathcal{E}(f,\chi)$
  \end{center}
Showing that, $\mathcal{E}$ is \textit{invariant} under the transformation map,
\begin{center}
	$(f,\chi)\mapsto(\hat{f},\check{\chi})$
\end{center}

Also, given the fact that, $\chi=\mu|.|^{s}$, and also $\chi$ being invariant under the tranformation map,
\begin{center}
	$tx\mapsto -tx$
\end{center}
Therefore, $\chi(tx)=\chi(-tx)$ for every $t\in \R$ and, $x\in \Ik$, since $\chi$ is an \textit{Id\`{e}le Class Character}.\\\\
Therefore, substituting $\chi(-tx)$ instead of $\chi(tx)$ in the second integral in the \textit{Equation\eqref{20}}, we obtain,
\begin{center}
	$\Zeta=\Zetaa$
\end{center}
Which establishes our desired result and completes the proof of the theorem.

\end{proof}

\end{enumerate}
	\end{proof}
	
\begin{rmk}\label{rmk15}
	In the \textit{Theorem\eqref{thm3}} the \textit{volume} of the \textit{Id\`{e}le Class Group of norm $1$}, $\Ck:=\Ik^{1}/K^{*}$ is measured with respect to the \textit{Haar Measure} on $\mathcal{C}_{K}$ defined by $\Dx$ and the counting measure on $K^{*}$.
\end{rmk}	
	\begin{rmk}\label{rmk16}
		By further calculation it can be deduced that,
		\begin{center}
			$Vol.(\Ck)=-Res_{s=1}\zeta_{K}(s)$
		\end{center}
	Where, $\zeta_{K}(s)$ denotes the \textit{Dedekind Zeta Fumction} on the global field $K$.
	\end{rmk}
\begin{rmk}
	If we consider the global field $K$ to be a \textit{funcion Field} instead of a \textit{number field}, the same statement of the \textit{main theorem} mentioned above holds true for the \textit{Global Zeta Functions} $\Zeta$ of $K$, although the proof differs significantly from that in case of the \textit{number fields}.
\end{rmk}
	
	\vspace{80pt}
\section*{Acknowledgments}
	I'll always be grateful to \textbf{Prof. Debargha Banerjee} ( Associate Professor, Department of Mathematics, IISER Pune, India ) for supervising my research project on this topic. His kind guidance helped me immensely in detailed understanding of this topic.

\end{document}